\newtheorem{thm}{Theorem}[section]
\newtheorem{prop}[thm]{Proposition}
\newtheorem{lem}[thm]{Lemma}
\newtheorem{cor}[thm]{Corollary}
\newtheorem{rem}[thm]{Remark}
\theoremstyle{definition}
\newtheorem{defi}{Definition}[section]
\newcommand{\R}{\mathbb{R}}
\renewcommand{\P}{\mathbb{P}}
\newcommand{\N}{\mathbb{N}}
\newcommand{\C}{\mathbb{C}}
\newcommand{\E}{\mathbb{E}}
\newcommand{\Conf}{\mathrm{Conf}}
\newcommand{\K}{\mathcal{K}}
\newcommand{\cP}{\mathcal{P}}
\newcommand{\cK}{\mathcal{K}}
\newcommand{\La}{\Lambda}
\newcommand{\la}{\lambda}
\newcommand{\cM}{\mathcal{M}}
\newcommand{\cN}{\mathcal{N}}
\newcommand{\cS}{\mathcal{S}}
\newcommand{\cW}{\mathcal{W}}
\newcommand{\trivial}{\mathbf{1}}
\newcommand{\cF}{\mathcal{F}}
\newcommand{\bra}{\langle}
\newcommand{\ket}{\rangle}
\newcommand{\var}{\mathrm{var}}
\newcommand{\tr}{\mathrm{Tr}}
\newcommand{\locally}{\mathcal{I}_{1,\mathrm{loc}}}
\newcommand{\ran}{\mathrm{ran}}
\newcommand{\cH}{\mathcal{H}}
\newcommand{\supp}{\mathrm{supp}}
\title{Accumulated spectrograms for hyperuniform determinantal point processes}
\author[M. Katori]{Makoto Katori}
\address{Department of Physics, Faculty of Science and Engineering, Chuo University, Kasuga,
Bunkyo-ku, Tokyo 112-8551, Japan}
\email{makoto.katori.mathphys@gmail.com}  
\author[P. Lazag]{Pierre Lazag}
\address{SISSA, via Bonomea 265, 34136, Trieste, Italy\\
 and LAREMA, UMR CNRS 6093, 2 Boulevard Lavoisier
49045 Angers cedex 01, France}
\email{pierrelazag@hotmail.fr}  
\date{}
\author[T. Shirai]{Tomoyuki Shirai}
\address{Institute of Mathematics for Industry
Kyushu University
744 Motooka, Nishi-ku,
Fukuoka 819-0395, Japan}
\email{shirai@imi.kyushu-u.ac.jp}  
\begin{document}

\maketitle

\begin{abstract}
    We define the accumulated spectrogram associated to a locally trace class orthogonal projection operator and  to a bounded set using the polar decomposition of its restriction on that set. We prove a convergence theorem for accumulated spectrograms along an exhaustion. We show that a radial determinantal point process on $\R^d$ is always hyperuniform along the exhaustion formed by the dilations of a bounded open set, and as a consequence, we obtain that dilations of the corresponding accumulated spectrogram converge to the indicator function of the considered set, establishing thus a universal phenomenon. Our result is a generalisation of a theorem by Abreu-Gr\"ochenig-Romero in \cite{AGR16} concerning time-frequency localization operators.
\end{abstract}
\section{Introduction}

In \cite{AGR16}  the authors associate to time-frequency localization operators and a bounded set $\Lambda \subset \R^{2d}$ a function in $L^1(\R^{2d})$, which they call \emph{accumulated spectrogram}, and prove a convergence theorem for dilations of this function to the indicator function of the set $\La$. Our aim is to generalize this construction together with the convergence theorem to any locally trace class orthogonal projection giving rise to a hyperuniform determinantal point process on some space state $S$ equipped with a Radon measure $\lambda$.\\

We construct the equivalent of the accumulated spectrogram associated to an orthogonal projection  and a bounded set $\La \subset S$ using the polar decomposition of that operator resticted to $\La$ on the right in Section \ref{sec:polardecompo} below, see Definition \ref{def:accspec}.\\

We formulate our main results in Section \ref{sec:mainresult}. We first state the abstract convergence theorem for accumulated spectrograms, Theorem \ref{thm:L1-limit} below. In particular, in the case when the state space is the euclidean space $\R^d$ equipped with the Lebesgue measure and when the kernel of the orthogonal projection is constant on its diagonal, the dilations along a fixed bounded set of the accumulated spectrogram converge to the indicator function of that set, see Corollary \ref{cor:L1-lim1}. Since the limit we obtain does not depend on the specific orthogonal projection, we face here a universality phenomenon.

The assumption of hyperuniformity we need can be expressed analytically, without linking it with the concept of determinantal point processes, see Remark \ref{rem:hyperunif}. However, it has a simple and comprehensive probabilistic interpretation once the latter notion has been introduced. For further background on hyperuniform point processes, see the surveys \cite{Torquato2018} and \cite{GhoshLebowitz2017}.

We prove that any radial determinantal point process is hyperuniform along any exhaustion by a dilation of a bounded open set, see Proposition \ref{prop:hyperunifradial} below. Consequently, the assumptions of Theorem \ref{thm:L1-limit} and Corollary \ref{cor:L1-lim1} are satisfied in that case, which leads to the result stated in Corollary \ref{cor-L1lim2}. We thus recover the main result from \cite{AGR16}, where the orthogonal projection is constructed out from time-frequency localization operators, as a particular example.

Other examples include the \emph{Heisenberg family of determinantal point processes} (see \cite{MKS21}, \cite{K22}, \cite{KS22a}), also sometimes called Heisenberg or Weyl-Heisenberg ensembles (see \cite{APRT17}, \cite{AGR19}) as well as higher dimensional generalizations of the sine process, namely the \emph{Euclidean family of determinantal point process} (\cite{KS22a}), and is also called \emph{Fermi-sphere ensembles} in \cite{TSZ2008}, \cite{SZT2009}, \cite{Torquato2018}. We perform explicit computations for the latter at the end of this article by means of the geometric method we use to prove hyperuniformity for all radial determinantal point processes. The Euclidean family of determinantal point process also appears as local semi-classical limits of fermions point fields on compact Riemannian manifolds (\cite{KS22b}) or on $\R^d$ for a wide class of potentials (\cite{DeleporteLambert}). Both types, Heisenberg and Euclidean family of determinantal point processes, appeared in \cite{Zel00} (see also \cite{Zel09} and references therein) where the terminology is taken from, in the study of asymptotics of zeroes of random polynomials or random waves, respectively for complex or real Riemannian manifolds.

\subsection{Polar decomposition and accumulated spectrogram associated to a locally trace class orthogonal projection} \label{sec:polardecompo}
The material we present here on polar decomposition is very standard, and can be found for instance in \cite{Reed-Simon1}, ch. 6. Our presentation is however quite detailed, to the extent that we will subsequently need elements of the construction. For the notion and properties of trace class operators, see e.g. \cite{Sim05}.

Let $S$ be a locally compact, complete and separable metric space, and let $\lambda$ be a Radon measure on $S$. We consider the separable complex Hilbert space $L^2(S,\lambda)$ with inner product $\bra, \ket$ being linear in the first variable
\[ \bra f , g \ket := \int_S f(x) \overline{g(x)} d\lambda(x), \quad f,g \in L^2(S, \lambda).\]
Let $\Lambda \subset S$ be a bounded Borel set and denote by $\cP_\Lambda$ the restriction operator
\begin{align*}
\cP_\Lambda : L^2(S,\lambda) &\to L^2(S,\lambda) \\
f & \mapsto \trivial_\Lambda f,
\end{align*}
where $\trivial_\Lambda$ is the indicator function of $\Lambda$. We let $\locally(S,\lambda)$ be the ideal of locally trace class operators on $L^2(S,\lambda)$, i.e. the ideal of operators $\cK : L^2(S,\lambda) \rightarrow L^2(S,\lambda)$ such that for any bounded set $\La \subset S$, the operator $\cP_\La \cK \cP_\La$ is of trace class. A locally trace class operator $\cK$ admits a kernel, i.e. a function $K : S \times S \rightarrow \C$ such that for all $f \in L^2(S,\lambda)$ and $\lambda$-almost every $x \in S$
\[\cK f(x) = \int_S K(x,y)f(y) d\lambda(y).
\] 
Let $\cK \in \locally(S,\lambda)$ be an orthogonal projection onto a closed subspace $L$ of $L^2(S,\lambda)$. Since $L^2(S,\lambda)$ is separable, we can consider a Borel set $S_0 \subset S$ verifying $\lambda(S \setminus S_0)=0$ and such that every function $f \in L$ is defined everywhere on $S_0$. Such an operator defines a determinantal point process $\P$, see Section \ref{sec:dpp} below. For a bounded set $\La \subset S$, the operator 
\begin{align} \label{eq:defMlambda}
\cM_\La =\cP_\La \cK \cP_\La
\end{align}
is of trace class and admits the spectral decomposition 
\begin{align} \label{eq:specdecompo1}
\cM_\Lambda = \sum_{j=1}^{+\infty} \mu_j^{(\Lambda)} \Phi_j^{(\Lambda)} \otimes \Phi_j^{(\Lambda)},
\end{align}
where $\{\mu_j^{(\Lambda)} \}_{j\geq 1} \subset [0,1]$ is the set of eigenvalues of $\cM_\Lambda$ ordered in the non-increasing order 
\begin{align*}
1 \geq \mu_1^{(\Lambda)} \geq \mu_2^{(\Lambda)} \geq ... \geq 0,
\end{align*}
and $\Phi_j^{(\Lambda)} \in L^2(S,\lambda)$ is the normalized eigenfunction of $\cM_\Lambda$ corresponding to the eigenvalue $\mu_j^{(\Lambda)}$. The functions $\Phi_j^{(\La)}$ form an orthonormal basis of $L^2(S,\lambda)$. The representation (\ref{eq:specdecompo1}) means that, for any $f \in L^2(S,\lambda)$, one has the decomposition
\begin{align*}
\cM_{\La}f= \sum_{j=1}^{+\infty} \mu_j^{(\La)} \bra f , \Phi_j^{(\La)} \ket \Phi_j^{(\La)}.
\end{align*}
\begin{rem}
    The representation (\ref{eq:specdecompo1}) is essentially unique, in the sense that lack of uniqueness only arises for eigenvalues of multiplicity greater than one.
\end{rem}
Consider now the operator
\begin{align} \label{eq:defNlambda}
\cN_\La = \cK \cP_\La.
\end{align}
Its adjoint operator is the operator $\cP_\La \cK$ so that $\cN_\La^* \cN_\La = \cM_\La$, and we have the polar decomposition
\begin{align} \label{eq:polardecompo0}
\cN_\La = \cW_\La \sqrt{\cM_\La},
\end{align}
where $\cW_\La$ is the operator defined as follows: for $g=\sqrt{\cM_\La} f$, we set $\cW_\La g = \cN_\La f$, and we set $\cW_\La g = 0 $ for $g \in \ker(\cM_\La) = \ker( \cN_\La )$. Since $(\cW_\La)_{|\ran (\cW_\La)}$ is an isometry and since we have the orthogonal decomposition $L^2(S, \lambda) = \overline{\mathrm{ran}( \cM_\La ^* )} \oplus \mathrm{ker}(\cM_\La)$, the operator $\cW_\La$ is well defined. For $j \in \N$, we define
\begin{align}
\Psi_j^{(\La)} = \cW_\La \Phi_j^{(\La)},
\end{align}
so that we can write the polar decomposition (\ref{eq:polardecompo0}) as
\begin{align}  \label{eq:polardecompo}
\cN_\La = \sum_{j=1}^{+\infty} \sqrt{\mu_j^{(\La)}} \Phi_j^{(\La)} \otimes \Psi_j^{(\La)},
\end{align}
meaning that 
\begin{align*}
\cN_\La f= \sum_{j=1}^{+ \infty} \sqrt{\mu_j^{(\La)}} \bra f , \Phi_j^{(\La)} \ket \Psi_j^{(\La)}, \quad f \in L^2(S,\lambda).
\end{align*}
Observe that by construction, we have for all $j \in \N$ that
\begin{align} \label{eq:exprpsi}
    \cN_\La \Phi_j^{(\La)} = \sqrt{\mu_j^{(\La)}} \Psi_j^{(\La)}.
\end{align}
We associate to the operator $\cK$ and the set $\La \subset S$ a function in $L^1(S,\lambda)$ called the \emph{accumulated spectrogram} in the following way.
\begin{defi} \label{def:accspec}
The {\it accumulated spectrogram} associated to $(\cK, \Lambda)$ is the function $\rho_\Lambda \in L^1(S,\lambda)$ defined by 
\begin{align*}
\rho_\Lambda (x):= \sum_{j=1}^{N_\Lambda} |\Psi_j^{(\Lambda)}(x)|^2, \quad x \in S_0,
\end{align*}
where $N_\Lambda$ is the upper integer part of the trace of $\cM_\La$, i.e. the smallest integer that is larger than $\tr(\cM_\La)$.
\end{defi}
\subsection{Main result} \label{sec:mainresult}
For $f \in L^1(S,\lambda)$, we denote by $||f||_1$ its $L^1$ norm. We are interested in accumulated spectrograms along an exhaustion of $S$.
\begin{defi}An {\it exhaustion} $\{\Lambda^{(n)} \}_{n \in \N}$ is a family of subsets of $S$ such that 
\begin{enumerate}
 \item[$\bullet$] each $\Lambda^{(n)}$ is a Borel relatively compact set;
 \item[$\bullet$] for all $n \in \N$, one has the inclusion $\Lambda^{(n)} \subset \Lambda^{(n+1)}$;
 \item[$\bullet$] the sets $\La^{(n)}$ cover $S$: $\cup_{n \in \N} \Lambda^{(n)} = S$.
\end{enumerate} 
\end{defi}
Our main result is the following. We refer to Section \ref{sec:dpp} below for the notion of hyperuniform determinantal point processes. 
\begin{thm} \label{thm:L1-limit} Fix an exhaustion $\{ \Lambda^{(n)} \}_{n\in \N}$ of $S$ and assume that the determinantal point process associated to $\cK$ is hyperuniform along this exhaustion. Then one has the convergence in $L^1(S,\lambda)$ 
\begin{align} \label{lim:thm}
\lim_{n \rightarrow + \infty} \frac{1}{N_{\Lambda^{(n)}}} || \rho_{\Lambda^{(n)}} - K(\cdot,\cdot)\trivial_{\Lambda^{(n)}} ||_1 = 0.
\end{align}
\end{thm}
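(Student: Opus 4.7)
The plan is to introduce the intermediate function $\tilde{\rho}_\La(x) := \sum_{j=1}^{\infty} \mu_j^{(\La)} |\Psi_j^{(\La)}(x)|^2$, which by the polar decomposition (\ref{eq:polardecompo}) is the diagonal of the positive trace class operator $\cN_\La \cN_\La^* = \cK \cP_\La \cK$, and to bound separately, via the triangle inequality, the two differences $|| \rho_\La - \tilde{\rho}_\La ||_1$ and $|| \tilde{\rho}_\La - K(\cdot,\cdot)\trivial_\La ||_1$ by an explicit multiple of the number variance
\[
\var(\#\La) = \tr(\cM_\La) - \tr(\cM_\La^2) = \sum_j \mu_j^{(\La)}(1 - \mu_j^{(\La)}).
\]
The hyperuniformity hypothesis along the exhaustion should express exactly that $\var(\#\La^{(n)}) = o(\tr(\cM_{\La^{(n)}})) = o(N_{\La^{(n)}})$, so dividing by $N_{\La^{(n)}}$ at the end will yield the conclusion.

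For the kernel-side difference, expanding the kernel of $\cK \cP_\La \cK$ on the diagonal gives $\tilde{\rho}_\La(x) = \int_\La |K(x,z)|^2 d\la(z)$, while the projection identity $\cK^2 = \cK$ produces the reproducing relation $K(x,x) = \int_S |K(x,z)|^2 d\la(z)$. Subtracting, the difference splits according to $x \in \La$ versus $x \in \La^c$ into two non-negative functions with disjoint supports, so
\[
|| \tilde{\rho}_\La - K(\cdot,\cdot)\trivial_\La ||_1 = 2 \int_\La \int_{\La^c} |K(x,z)|^2 d\la(z)\, d\la(x) = 2\, || \cP_{\La^c} \cK \cP_\La ||_{\mathrm{HS}}^2 = 2 \var(\#\La).
\]

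For the spectral-side difference, the orthonormality of $\{\Psi_j^{(\La)} : \mu_j^{(\La)} > 0\}$, inherited from the $\Phi_j^{(\La)}$ via the partial isometry $\cW_\La$, gives
\[
|| \rho_\La - \tilde{\rho}_\La ||_1 \leq \sum_{j=1}^{N_\La} (1 - \mu_j^{(\La)}) + \sum_{j > N_\La} \mu_j^{(\La)}.
\]
The main obstacle is to show that this right-hand side is $\leq C \var(\#\La) + 1$ for an absolute constant $C$. My strategy is to compare the truncation at $N_\La$ with the threshold truncation at $N_* := |\{j : \mu_j^{(\La)} \geq 1/2\}|$: the elementary inequality $\sum_j \min(\mu_j, 1-\mu_j) \leq 2 \sum_j \mu_j(1-\mu_j)$ bounds the analogous sum at $N_*$ by $2 \var(\#\La)$, while the monotonicity of the $\mu_j^{(\La)}$ together with $|N_\La - \tr(\cM_\La)| \leq 1$ forces $|N_\La - N_*| \leq 2 \var(\#\La) + 1$, which controls the excess from the indices where the two truncations disagree. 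Combining the two bounds yields $|| \rho_\La - K(\cdot,\cdot)\trivial_\La ||_1 \leq C' \var(\#\La) + 1$, and dividing by $N_{\La^{(n)}}$ concludes.
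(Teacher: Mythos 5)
Your proposal is correct, and its skeleton is the same as the paper's: your intermediate function $\tilde\rho_\La(x)=\sum_j \mu_j^{(\La)}|\Psi_j^{(\La)}(x)|^2$ is exactly the quantity $\bra \cN_\La K_x,K_x\ket$ the paper works with (your two identifications of it, spectral and as $\int_\La |K(x,y)|^2 d\lambda(y)$, are Propositions \ref{prop:innerprod1} and \ref{prop:G}), and your kernel-side estimate $\|\tilde\rho_\La-K(\cdot,\cdot)\trivial_\La\|_1= 2\var(\Xi(\La))$ is Lemma \ref{lem:G} (where the paper only records the inequality $\le$). Where you genuinely diverge is the spectral-side estimate, i.e.\ the analogue of Lemma \ref{lem:Ndelta}. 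The paper, following Abreu--Gr\"ochenig--Romero, keeps a free parameter $\delta\in(0,1)$: it lower-bounds $\sum_{j\le N_\La}\mu_j^{(\La)}\ge (1-\delta)\min\{N_\La,N_\La^{(\delta)}\}$ via Proposition \ref{prop:Ndelta}, which leaves a term $2\delta\,\E[\Xi(\La)]$ in the final bound $1+2\delta\E[\Xi(\La)]+2(1-\delta)C_\delta\var(\Xi(\La))$, so the theorem is finished by a limsup argument sending $\delta\to0$ after $n\to\infty$. You instead fix the threshold at $1/2$, and your steps do close: with $N_*=\#\{j:\mu_j^{(\La)}\ge 1/2\}$ and monotonicity of the eigenvalues,
\begin{align*}
\sum_{j\le N_*}(1-\mu_j^{(\La)})+\sum_{j>N_*}\mu_j^{(\La)}=\sum_j \min\bigl(\mu_j^{(\La)},1-\mu_j^{(\La)}\bigr)\le 2\sum_j \mu_j^{(\La)}(1-\mu_j^{(\La)})=2\var(\Xi(\La)),
\end{align*}
the same display gives $|N_*-\tr(\cM_\La)|\le 2\var(\Xi(\La))$, hence $|N_\La-N_*|\le 2\var(\Xi(\La))+1$, and each index between the two cut-offs changes the truncated sum by $|1-2\mu_j^{(\La)}|\le 1$, so the excess is at most $|N_\La-N_*|$; altogether $\|\rho_\La-K(\cdot,\cdot)\trivial_\La\|_1\le 6\var(\Xi(\La))+1$, and dividing by $N_\La\ge\E[\Xi(\La)]$, hyperuniformity ($\var/\E\to0$, $\E\to\infty$) concludes with no limiting parameter. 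What your route buys is an absolute-constant bound, hence an explicit rate $O(\var(\Xi(\La^{(n)}))/\E[\Xi(\La^{(n)})]+1/N_{\La^{(n)}})$, whereas the paper's version keeps the $\delta$-family of estimates in the spirit of Lemma 3.3 of \cite{AGR16} and pays for it with the final $\delta\to0$ step; the two are otherwise interchangeable. One minor point to state when writing it up: the $\Psi_j^{(\La)}$ are orthonormal (in particular of unit norm) only for $\mu_j^{(\La)}>0$, and vanish when $\mu_j^{(\La)}=0$, which only improves the inequality you integrate.
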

In the case where the state space $S$ is the euclidean space $\R^d$ equipped with the Lebesgue measure $dx$ and when the kernel $K$ of $\K$ is constant on its diagonal, Theorem \ref{thm:L1-limit} implies that dilations of the accumulated spectrograms form an approximation of the indicator function of a relatively compact set in $\R^d$.
\begin{cor} \label{cor:L1-lim1}Let $d\geq 1$ be a positive integer and consider the case $(S,\lambda)=(\R^d,dx)$. Let $\Lambda\subset \R^d$ be a bounded Borel set, let $K$ be the kernel of a locally trace class orthogonal projection  $\K$ on $L^2(\R^d,dx)$ such that the corresponding determinantal point process is hyperuniform along the exhaustion $\{R\Lambda \}_{R>0}$. Assume that $K$ is constant on the diagonal and normalized, i.e. that we have $K(x,x)=1$ for alomst all $x \in \R^d$. We then have the convergence in $L^1(\R^d,dx)$:
\begin{align*}
\rho_{R\Lambda}(R \cdot) \overset{L^1}{\longrightarrow} \trivial_\Lambda
\end{align*}
as $R \to + \infty$.
\end{cor}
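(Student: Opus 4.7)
The plan is to reduce the corollary to Theorem \ref{thm:L1-limit} applied to the exhaustion $\{R_n \Lambda\}$ for an arbitrary sequence $R_n \to +\infty$, and then translate the resulting $L^1$ estimate through the scaling $y = Rx$. Since the statement uses the continuous parameter $R$, I would phrase the conclusion as convergence along every such sequence.

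First I would use the hypothesis $K(x,x)=1$ almost everywhere to simplify the bound provided by Theorem \ref{thm:L1-limit}: the term $K(\cdot,\cdot)\trivial_{R\Lambda}$ reduces to $\trivial_{R\Lambda}$, so
\begin{align*}
\lim_{R \to +\infty} \frac{1}{N_{R\Lambda}} \bigl\| \rho_{R\Lambda} - \trivial_{R\Lambda} \bigr\|_1 = 0.
\end{align*}
Next I would compute $N_{R\Lambda}$: by definition it is the upper integer part of $\tr(\cM_{R\Lambda}) = \int_{R\Lambda} K(x,x)\, dx = R^d |\Lambda|$, so in particular $N_{R\Lambda}/R^d \to |\Lambda|$ as $R \to +\infty$.

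The key step is then a change of variables. Since $\trivial_\Lambda(x) = \trivial_{R\Lambda}(Rx)$, setting $y = Rx$ yields
\begin{align*}
\bigl\| \rho_{R\Lambda}(R\cdot) - \trivial_\Lambda \bigr\|_1 = \int_{\R^d} \bigl| \rho_{R\Lambda}(Rx) - \trivial_{R\Lambda}(Rx) \bigr|\, dx = R^{-d} \bigl\| \rho_{R\Lambda} - \trivial_{R\Lambda} \bigr\|_1.
\end{align*}
Combining the three ingredients,
\begin{align*}
\bigl\| \rho_{R\Lambda}(R\cdot) - \trivial_\Lambda \bigr\|_1 = \frac{N_{R\Lambda}}{R^d} \cdot \frac{\bigl\| \rho_{R\Lambda} - \trivial_{R\Lambda} \bigr\|_1}{N_{R\Lambda}} \longrightarrow |\Lambda| \cdot 0 = 0.
\end{align*}

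In summary, the argument is essentially an accounting exercise: Theorem \ref{thm:L1-limit} is already doing all the work, and the only content of the corollary is the scaling identity plus the trace computation. There is no real obstacle; the one point worth being careful about is that $N_{R\Lambda}$ is of order $R^d$, which is exactly what is needed to cancel the Jacobian factor produced by the rescaling $y=Rx$.
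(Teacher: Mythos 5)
Your proposal is correct and follows essentially the same route as the paper: rescale by $y=Rx$ to pick up the factor $R^{-d}$, note that $K(x,x)=1$ makes the normalization ($N_{R\Lambda}$, or equivalently $\E[\Xi(R\Lambda)]=\mathrm{Leb}(R\Lambda)$) comparable to $R^d$, and then invoke Theorem \ref{thm:L1-limit}. The only difference is cosmetic: the paper bounds $\mathrm{Leb}(R\Lambda)\le CR^d$ and normalizes by $\E[\Xi(R\Lambda)]$, while you use $N_{R\Lambda}/R^d \to \mathrm{Leb}(\Lambda)$ directly.
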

\subsection{The case of radial determinantal point processes on $\R^d$}
A particularly interesting case is the one of radial determinantal point processes  on $\R^d$ coming from orthogonal projection kernels. We define a radial determinantal point process as a determinantal point process having a radially symmetric correlation kernel.
\begin{defi} \label{def:radial}
Let $\cK$ be a locally trace class orthogonal projection defined on $L^2(\R^d,dx)$ with kernel $K$ and let $\P$ be the corresponding determinantal point process. We say that the point process $\P$ is \emph{radial} if there exists a function
\[\varphi : [0,+\infty) \to [0,+ \infty) \]
such that for alomst all $x,y \in \R^d$, we have
\[|K(x,y)|^2 = \varphi( ||x-y||), \]
where $||x-y||$ is the Euclidean distance from $x$ to $y$ in $\R^d$.
\end{defi}
In particular, the correlation kernel of a radial determinantal point process is constant on its diagonal, and we assume that it is normalized such that $K(x,x) \equiv 1$.
\begin{cor} \label{cor-L1lim2}
Let $\P$ be a radial determinantal point process on $\R^d$ coming from a locally trace class orthogonal projection. Then, for any relatively compact open set $\La \subset \R^d$, we have the convergence in $L^1(\R^d,dx)$:
\begin{align*}
\rho_{R\Lambda}(R \cdot) \overset{L^1}{\longrightarrow} \trivial_\Lambda
\end{align*}
as $R \to + \infty$.
\end{cor}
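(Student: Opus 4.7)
The plan is to derive Corollary \ref{cor-L1lim2} by combining the two main results already established in the paper: Corollary \ref{cor:L1-lim1}, which converts hyperuniformity along the dilation exhaustion $\{R\La\}_{R>0}$ into the desired $L^1$ convergence for normalized, diagonal-constant kernels, and Proposition \ref{prop:hyperunifradial}, which supplies that hyperuniformity in the radial setting. Once both are in hand, the argument reduces to verifying that the hypotheses of Corollary \ref{cor:L1-lim1} are satisfied in our situation.

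First I would check the kernel hypotheses. By Definition \ref{def:radial}, we have $|K(x,y)|^2 = \varphi(\|x-y\|)$ for almost every $x,y \in \R^d$, so in particular $|K(x,x)|^2 = \varphi(0)$ is constant on the diagonal. Combined with the normalization convention stated just after Definition \ref{def:radial}, this yields $K(x,x) \equiv 1$ almost everywhere. Thus $K$ is constant on the diagonal and equals $1$ there, matching exactly the hypothesis on the kernel in Corollary \ref{cor:L1-lim1}. Moreover, any relatively compact open set $\La \subset \R^d$ is automatically a bounded Borel set, which is the remaining geometric hypothesis needed there.

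Next I would verify the hyperuniformity assumption. Since $\La$ is relatively compact and open, the family $\{R\La\}_{R>0}$ (or, equivalently, its restriction to any increasing sequence $R_n \uparrow +\infty$) constitutes an exhaustion of $\R^d$ of the type handled in Proposition \ref{prop:hyperunifradial}. That proposition asserts precisely that every radial determinantal point process arising from a locally trace class orthogonal projection is hyperuniform along such a dilation exhaustion. Hence the hyperuniformity hypothesis of Corollary \ref{cor:L1-lim1} is automatically fulfilled by $\P$. Applying that corollary then yields the announced $L^1(\R^d,dx)$ convergence $\rho_{R\La}(R\cdot) \to \trivial_\La$ as $R \to +\infty$.

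The proof of the corollary itself therefore amounts to assembly, with no genuine obstacle at this stage: all the analytic and probabilistic content is absorbed into Proposition \ref{prop:hyperunifradial} and Corollary \ref{cor:L1-lim1}. The only substantive difficulty, which belongs to the proof of Proposition \ref{prop:hyperunifradial} rather than to this corollary, is the geometric estimate showing that the variance of the number of points in $R\La$ grows strictly slower than its expectation as $R \to +\infty$, which for a radial kernel reduces to a careful analysis of the integral of $\varphi(\|x-y\|)$ over $R\La \times (R\La)^c$.
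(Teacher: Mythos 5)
Your proposal is correct and follows exactly the paper's route: the paper likewise obtains Corollary \ref{cor-L1lim2} as a direct consequence of Corollary \ref{cor:L1-lim1} combined with Proposition \ref{prop:hyperunifradial}, after noting the normalization $K(x,x)\equiv 1$ for radial kernels. Nothing is missing from your assembly argument.
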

Corollary \ref{cor-L1lim2} is a direct consequence of Corollary \ref{cor:L1-lim1} and the following Proposition asserting that any radial determinantal point process is hyperuniform along any exhaustion of the form $\{R \La \}_{R>0}$.
\begin{prop} \label{prop:hyperunifradial} 
Let $\P$ be a radial determinantal point process on $(\R^d,dx)$ governed by an orthogonal projection operator. Then $\P$ is hyperuniform along any exhaustion $\{ R\La \}_{R >0}$, where $\La \subset \R^d$ is a bounded open set with positive Lebesgue measure.
\end{prop}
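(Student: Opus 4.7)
The plan is to estimate the variance of the counting statistic $N(R\La)$ directly and to show $\var(N(R\La)) = o(|R\La|)$ as $R \to +\infty$, which is the form of hyperuniformity along $\{R\La\}_{R>0}$. Starting from the standard DPP variance formula
\[
\var(N(R\La)) = \int_{R\La} K(x,x)\,dx - \int_{R\La}\int_{R\La} |K(x,y)|^2\,dx\,dy,
\]
I would use two ingredients special to our setting: radiality together with the stated normalization gives $K(x,x) \equiv 1$, and the projection identity $\cK^2 = \cK$ combined with $K(x,y) = \overline{K(y,x)}$ gives $\int_{\R^d} |K(x,y)|^2\,dy = K(x,x) = 1$. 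Substituting $|K(x,y)|^2 = \varphi(\|x-y\|)$, this shows $\varphi \in L^1(\R^d)$ with $\int_{\R^d}\varphi(\|u\|)\,du = 1$ and rewrites the variance as the surface-type integral
\[
\var(N(R\La)) = \int_{R\La} \int_{(R\La)^c} \varphi(\|x-y\|)\,dy\,dx.
\]

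Next, I would apply Fubini and the change of variable $u = x-y$ to get $\var(N(R\La)) = \int_{\R^d} \varphi(\|u\|)\,|R\La \setminus (R\La + u)|\,du$. Because translation preserves Lebesgue measure, $|R\La \setminus (R\La + u)| = \tfrac12\,|R\La \triangle (R\La + u)|$, and rescaling yields $|R\La \triangle (R\La + u)| = R^d\,|\La \triangle (\La + u/R)|$. Dividing by $|R\La| = R^d |\La|$ gives
\[
\frac{\var(N(R\La))}{|R\La|} = \frac{1}{2|\La|} \int_{\R^d} \varphi(\|u\|)\,|\La \triangle (\La + u/R)|\,du.
\]
The conclusion then follows by dominated convergence: the integrand is bounded pointwise by $2|\La|\,\varphi(\|u\|)$, which is integrable on $\R^d$, while continuity of translation in $L^1(\R^d)$ applied to $\trivial_\La$ ensures $|\La \triangle (\La + u/R)| \to 0$ as $R \to +\infty$ for every fixed $u$. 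Hence the ratio tends to $0$, which is precisely the hyperuniformity assertion.

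The main obstacle I foresee is not any analytic step individually but controlling the geometric quantity $|R\La \triangle (R\La + u)|$ for an arbitrary bounded open set $\La$ with no regularity assumption on $\partial \La$. The crucial point is that $L^1$-continuity of translation requires only $\trivial_\La \in L^1(\R^d)$, which holds as soon as $|\La| < +\infty$, entirely bypassing any direct surface estimate on $\partial \La$. The projection hypothesis on $\cK$ is equally essential: it is precisely $\cK^2 = \cK$ that turns $\int_{\R^d} |K(x,y)|^2\,dy$ into the constant $K(x,x) = 1$, producing the cancellation that represents the variance as an integral over the boundary region $R\La \times (R\La)^c$ and opens the way to the translation argument above.
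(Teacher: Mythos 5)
Your argument is correct, and it takes a genuinely different route from the paper. You rewrite the number variance, via the reproducing property and the substitution $u=x-y$, as
\[
\var(\Xi(R\La))=\int_{\R^d}\varphi(\|u\|)\,\mathrm{Leb}\bigl(R\La\setminus(R\La+u)\bigr)\,du
=\frac{R^d}{2}\int_{\R^d}\varphi(\|u\|)\,\mathrm{Leb}\bigl(\La\triangle(\La+u/R)\bigr)\,du,
\]
and then conclude by dominated convergence together with the $L^1$-continuity of translations applied to $\trivial_\La$; since $\E[\Xi(R\La)]=R^d\,\mathrm{Leb}(\La)\to\infty$ trivially under the normalization $K(x,x)\equiv 1$, this gives hyperuniformity directly. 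The paper instead argues in two steps: it first proves hyperuniformity along balls by computing the lens volume $\mathrm{Leb}(B(0,R)^c\cap B(x,R))$ explicitly as a power series and invoking a density lemma for weighted averages of an $L^1$ function, and then passes to an arbitrary bounded open $\La$ via the Vitali covering theorem, the negative correlation inequality for variances over disjoint sets, translation invariance of the variance, and dominated convergence over the countable ball decomposition. Your route is shorter and slightly more general: it never uses openness of $\La$ (any bounded Borel set of positive measure works) and bypasses both the Vitali covering and the subadditivity step. What the paper's geometric method buys in exchange is the exact intersection-volume formula for balls, which is reused later to extract precise first-order asymptotics of the number variance for the Euclidean (Fermi-sphere) family; your soft argument yields only the $o(R^d)$ statement needed for hyperuniformity.
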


\subsection{Organization of the paper}
We recall the definition of hyperuniform point processes in section \ref{sec:dpp} as well as the notion of determinantal point processes and some of their basic properties.

In section \ref{sec:proof}, we prove our main result, Theorem \ref{thm:L1-limit} together with its Corollary \ref{cor:L1-lim1}.

We prove Proposition \ref{prop:hyperunifradial} in section \ref{sec:radial}.

Section \ref{sec:ex} is devoted to the description of some examples.

\subsection*{Acknowledgements} M.S. was supported by JSPS KAKENHI Grant Numbers JP19K03674, JP21H04432, JP22H05105, and JP23H01077. P.L. was supported by the project ULIS 2023-09915 from R\'egion Pays de la Loire. T.S. was supported by JSPS KAKENHI Grant Numbers JP20K20884, JP22H05105 and JP23H01077. T.S. was also supported in part by JSPS KAKENHI Grant Numbers JP20H00119 and JP21H04432.



\section{Determinantal point processes}\label{sec:dpp}

\subsection{Hyperuniform point processes}

A configuration $\Xi$ on $S$ is a non-negative integers valued Radon measure on $S$. The space of configuration, denoted by $\text{Conf}(S)$, is again a complete separable metric space, and we equip it with its Borel sigma-algebra. A configuration $\Xi \in \text{Conf}(S)$ is said to be {\it simple} if one has $\Xi(\{x\}) \in \{0,1\}$ for any $x \in S$.
\begin{defi}A {\it point process} on $S$ is a probability measure on $\text{Conf}(S)$. A point process is simple if it is supported on simple configurations.
\end{defi}
See \cite{daley-verejones} for a general treatment of point processes. Informally, a point process is said to be hyperuniform if it has a small number variance. Here and in the sequel, $\E$ denotes the expectation with respect to the point process $\P$ and if $X \in L^2(\Conf(S),\P)$, $\var(X)$ denotes the variance of the random variable $X$, i.e.
\[\var(X) = \E\left[ (X - \E [X] )^2 \right]. \] 
\begin{defi}
Let $\P$ be a point process on $S$ such that for any relatively compact Borel set $\La \subset S$, we have $\Xi(\La) \in L^2(\Conf(S), \P)$. The point process $\P$ on $S$ is {\it hyperuniform} along an exhaustion 
$\{\La^{(n)}\}_{n \ge 1}$ of $S$ if 
\[
 \lim_{n \to \infty} \E[\Xi(\La^{(n)})] = \infty, \quad 
 \lim_{n \to \infty}
 \frac{\var(\Xi(\La^{(n)}))}{\E[\Xi(\La^{(n)})]} = 0. 
\]
\end{defi}

\subsection{Determinantal point processes}

Let $\lambda$ be a Radon measure on $S$. For a simple point process $\P$ on $S$ and for a positive integer $n \in \N$, the $n$-th correlation function with respect to $\lambda^{\otimes n}$, denoted by $\rho_n : S^n \rightarrow \C$, if it exists, is defined by
\begin{multline}\label{eq:correl}\int_{\text{Conf}(S)} \overset{*}{\sum_{x_1,...,x_n \in \supp (\Xi)}}f(x_1,...,x_n) d\P(\Xi)
 = \int_{S^n} f(x_1,...,x_n) \rho_n(x_1,...,x_n) d\lambda(x_1)...d\lambda(x_n),
\end{multline}
for any measurable compactly supported function $f : S^n \rightarrow \C$. Here, the sum $\overset{*}{\sum}$ is taken over all pairwise distinct points $x_1,...,x_n \in \mathrm{supp}(\Xi)$.
\begin{defi}\label{def:dpp}A point process $\P$ on $S$ is a {\it determinantal point process} on $(S,\lambda)$ if there exists a kernel 
\[ K : S \times S \rightarrow \C.
\]
such that, for any $n \in \N$, the $n$-th correlation function exists and is given by
\[\rho_n(x_1,...,x_n) = \det \left( K(x_i,x_j) \right)_{i,j=1}^n.
\]
\end{defi}
See \cite{borodindet}, \cite{soshnikov}, \cite{ST03a} and \cite{ST03b} for further background on determinantal point processes. A kernel $K$ defines an integral operator $\cK$
\begin{align*} \cK : \hspace{0.1cm} L^2(S,\lambda) &\rightarrow L^2(S,\lambda) \\
f &\mapsto \left( x \mapsto \int_S f(y)K(x,y) d\lambda(y) \right).
\end{align*}
The following Theorem provides sufficient conditions for the kernel of a locally trace class operator to be the correlation kernel of a determinantal point process.
\begin{thm}[\cite{Macchi75} \cite{soshnikov}, \cite{ST03a}] \label{thm:existence} Let $\cK \in \locally(S,\lambda)$ be a hermitian locally trace class operator. Then, its kernel $K$ serves as the correlation kernel of a determinantal point process if and only if $0 \leq \cK \leq I$. In particular, any locally trace class orthogonal projection onto a closed subspace $ L \subset L^2(S,\lambda)$ gives rise to a determinantal point process.
\end{thm}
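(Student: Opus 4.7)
The plan is to establish both implications separately, with necessity being essentially automatic and sufficiency requiring a genuine probabilistic construction. The overarching strategy is to work locally: for any bounded Borel set $\La \subset S$, restrict $\cK$ to $\La$ via $\cM_\La = \cP_\La \cK \cP_\La$, which is trace class by assumption; then construct a point process on $\La$ from $\cM_\La$; and finally assemble these consistently by the Kolmogorov extension theorem along an exhaustion of $S$.

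For the necessity of $0 \leq \cK \leq I$, assume a DPP with correlation kernel $K$ exists. The correlation formula yields, for any bounded Borel $\La$ and $t \in [0,1]$, the Fredholm-determinant generating identity
\[ \E\Bigl[\prod_{x \in \Xi \cap \La}(1 - t) \Bigr] = \sum_{n \ge 0}\frac{(-t)^n}{n!}\int_{\La^n}\det(K(x_i,x_j))_{i,j=1}^n\, d\lambda^{\otimes n} = \det(I - t\cM_\La).\]
Non-negativity of this quantity, together with non-negativity of the Janossy densities of $\P$ on $\La$, forces every eigenvalue of $\cM_\La$ to lie in $[0,1]$. Since $\La$ is an arbitrary bounded Borel set, this upgrades to the operator inequality $0 \le \cK \le I$ on $L^2(S,\lambda)$.

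For sufficiency, I would diagonalize $\cM_\La = \sum_{i \ge 1} \mu_i \phi_i \otimes \phi_i$ with $\mu_i \in [0,1]$ and apply the Hough-Krishnapur-Peres-Vir\'ag mixture recipe: draw independent Bernoulli variables $\xi_i \sim \mathrm{Ber}(\mu_i)$, and conditionally on $\{\xi_i\}$, sample the \emph{projection} DPP associated to the random finite-rank projection $\sum_i \xi_i \phi_i \otimes \phi_i$. A projection DPP associated with an orthonormal system $\{\psi_1, \ldots, \psi_n\}$ is by definition the probability measure on $\La^n$ with symmetric density $\tfrac{1}{n!}\bigl|\det(\psi_j(x_k))_{j,k=1}^n\bigr|^2$, which integrates to $1$ by orthonormality. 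A direct Cauchy-Binet computation on each realization, followed by taking expectation over the $\xi_i$, collapses the sum of sub-Gram determinants into the target identity $\rho_n(x_1,\ldots,x_n) = \det(K_\La(x_i,x_j))_{i,j=1}^n$ for the restricted kernel $K_\La$ of $\cM_\La$. Consistency of the resulting processes across an exhaustion $\{\La^{(n)}\}_n$ is encoded by matching Fredholm-determinant generating functionals on nested domains, and the Kolmogorov extension theorem then delivers a DPP on $S$ with the prescribed kernel. The step I anticipate as the main obstacle is not the local construction but the global assembly: one must show that the limiting random element on $S$ is almost surely a locally finite (Radon) measure -- for which $\E[\Xi(\La)] = \tr(\cM_\La) < \infty$ is crucially used -- and must verify that the HKPV constructions on a nested exhaustion are genuinely pathwise consistent, since the spectral data $(\mu_i, \phi_i)$ and the auxiliary Bernoullis $\xi_i$ depend on the chosen bounded set in a non-canonical way.
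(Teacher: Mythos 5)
The paper does not actually prove this statement: Theorem \ref{thm:existence} is quoted from Macchi, Soshnikov and Shirai--Takahashi, so there is no internal proof to compare against. Your sketch reconstructs the standard argument from exactly those sources (together with the Hough--Krishnapur--Peres--Vir\'ag mixture construction): necessity from nonnegativity of the correlation functions and of the Fredholm-determinant generating function, sufficiency from the Bernoulli randomization of finite-rank projection processes, then a consistency/extension argument along an exhaustion. That route is the correct one and can be completed.

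Two places are thinner than they should be. First, in the necessity step, excluding eigenvalues of $\cM_\La$ above $1$ from $\det(I-t\cM_\La)=\E[(1-t)^{\Xi(\La)}]$ requires strict positivity of the right-hand side for $t\in[0,1)$, which uses $\Xi(\La)<\infty$ a.s.\ (available since $\tr\cM_\La<\infty$); and the lower bound $\cK\ge 0$ does not follow from that identity alone --- you need a separate argument that a Hermitian locally trace class operator whose minors $\det\left(K(x_i,x_j)\right)_{i,j=1}^n$ are a.e.\ nonnegative for every $n$ is a nonnegative operator. Second, and more importantly, in the global assembly matching correlation functions (or claiming matching generating functionals) of the constructions on nested sets does not by itself identify the laws: you need a determinacy statement saying that the correlation functions determine the point process uniquely. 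For Hermitian kernels with $0\le\cM_\La\le I$ this comes from Hadamard's inequality $\det\left(K(x_i,x_j)\right)\le\prod_i K(x_i,x_i)$, which yields the moment-growth bound under which Lenard's uniqueness theorem applies. With that in hand, only consistency \emph{in distribution} of the restrictions is needed for the Kolmogorov/projective-limit step; the pathwise consistency of the HKPV samplings that you flag as the main obstacle is not required (and indeed fails, since the spectral data and the auxiliary Bernoulli variables are non-canonical), so that worry can be discharged once the uniqueness lemma is in place.
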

For a determinantal point process $\P$ with kernel $K$ being the kernel of the integral operator $\cK$ and for a bounded Borel set $\La \subset S$, we have from Definition \ref{def:dpp} and formula (\ref{eq:correl}) that
\begin{equation} \label{eq:expectation}
\E [ \Xi (\Lambda) ] = \int_\Lambda K(x,x)d\lambda(x) =\tr ( \cM_\La )= \sum_{j=1}^{+\infty} \mu_j^{(\La)} = \tr ( \cN_\Lambda), 
\end{equation}
where $\cM_\La$ and $\cN_\La$ were respectively defined by (\ref{eq:defMlambda}) and (\ref{eq:defNlambda}), and where the numbers $\mu_j^{(\La)} \in [0,1]$, $j=1,2,\dots$ are the non-increasingly ordered eigenvalues of $\cM_\La$. The last equality in (\ref{eq:expectation}) follows the fact that the operators $\cM_\La$ and $\cN_\La$ have the same non-zero eigenvalues. We also have
\begin{align} \label{eq:variance0}
    \var (\Xi(\La)) = \tr (\cM_\La -\cM_\La^2).
\end{align}
\begin{rem} \label{rem:hyperunif}
    For a determinantal point process given by a locally trace class operator $\cK$ and if $\{ \La^{(n)} \}_{n \geq 1}$ is an exhaustion, we then have that $\P$ is hyperuniform along this exhaustion if and only if
    \begin{align*}
        \lim_{n \to + \infty} \tr(\cM_{\La(n)}) = \tr(\cK) = + \infty
    \end{align*}
    and
    \begin{align*}
        \lim_{n \to + \infty} \frac{\tr((\cM_{\La^{(n)}})^2) }{\tr(\cM_{\La^{(n)}})} = \lim_{n \to + \infty} \frac{\sum_j (\mu_j^{(\La^{(n)})})^2}{\sum_j \mu_j^{(\La^{(n)})}} =1.
    \end{align*}
\end{rem}

In the case when $\cK$ is an orthogonal projection, the reproducing property of its kernel
\begin{align} \label{eq:repprop}
\int_S K(x,y) K(y,z)d\lambda(y) = K(x,z)
\end{align}
implies
\begin{align}\label{eq:variance}
\var( \Xi(\Lambda) ) = \frac{1}{2} \int_S \int_S | \trivial_\La(x) - \trivial_\La(y)| |K(x,y)|^2 d\lambda(x)d\lambda(y),
\end{align}
which also leads to another useful expression 
\begin{equation}\label{eq:variance2}
\var(\Xi(\Lambda))= \int_\Lambda \int_{S \setminus \Lambda} |K(x,y)|^2 d\lambda(x) d\lambda(y).
\end{equation}
Observe that formula (\ref{eq:variance}) implies a negative correlation property in the following sense: if $\La, \La' \subset S$ are disjoint bounded Borel sets, then we have from the triangular inequality that
\begin{align} \label{ineq:negcor}
\var( \Xi( \La \sqcup \La')) \leq \var( \Xi (\La)) + \var( \Xi(\La')).
\end{align}
Also observe that from formula (\ref{eq:variance}) or (\ref{eq:variance2}) and using the reproducing property (\ref{eq:repprop}), we have the upper bound
\begin{align} \label{ineq:varexp}
\var(\Xi(\La)) \leq \E [\Xi(\La) ].
\end{align}
Inequalities (\ref{ineq:negcor}) and (\ref{ineq:varexp}) will be used in the proof of Proposition \ref{prop:hyperunifradial}.

\section{Proof of Theorem \ref{thm:L1-limit}} \label{sec:proof}
We state preliminary results, Lemmas \ref{lem:Ndelta} and \ref{lem:G} below, and prove Theoreom \ref{thm:L1-limit} in Section \ref{subsec:proof} assuming these results. Section \ref{subsec:proof} ends with the proof of Corollary \ref{cor:L1-lim1} from Theorem \ref{thm:L1-limit}. We prove Lemmas \ref{lem:Ndelta} and \ref{lem:G} in Section \ref{subsec:prooflem}.
\subsection{Preliminary results and proofs of Theorem \ref{thm:L1-limit} and Corollary \ref{cor:L1-lim1}} \label{subsec:proof}
\subsubsection{Preliminary results}
Fix a relatively compact set $\Lambda \subset S$. For $x \in S_0$, we define the function
\begin{align*}
K_x : y \mapsto K(y,x),
\end{align*}
and observe that we have
\begin{align}
\cK f (x)= \bra f, K_x \ket
\end{align}
for all $f \in L^2(S,\lambda)$, $x \in S_0$.

We also set
\begin{align*}
G( x) = K(x,x)\trivial_\Lambda(x) - \bra \cN_\Lambda K_x,K_x \ket, \quad x \in S_0.
\end{align*}
Observing that 
\[ \rho_\La(x) -K(x,x)\trivial_\La(x) = \rho_\La(x)- \bra \cN_\Lambda K_x,K_x\ket - G(x),
\]
we first estimate the left hand side of (\ref{lim:thm}) with 
\begin{align} \label{estim1}
||\rho_\Lambda - K( \cdot, \cdot)\trivial_\La ||_1 \leq ||\rho_\Lambda - \bra \cN_\Lambda K_\cdot,K_\cdot\ket ||_1 + ||G||_1.
\end{align}
Theorem \ref{thm:L1-limit} will now follow from Lemmas \ref{lem:Ndelta} and \ref{lem:G} below, each of them estimating a term on the right hand side of inequality (\ref{estim1}).
\begin{lem}\label{lem:Ndelta}
For $\delta \in (0,1)$, set
\begin{align} \label{eq:defcdelta}
C_\delta = \max \left\{ \frac{1}{\delta}, \frac{1}{1-\delta} \right\}.
\end{align}
Then, for any $\delta \in (0,1)$, we have 
\begin{align}
\|
\rho_{\La} - \bra \cN_{\La} K_{\cdot}, K_{\cdot}\ket\|_{1} 
\le 1 + 2 \delta \E[\Xi(\Lambda)] + 
2(1-\delta) C_{\delta}  \var(\Xi(\La)). 
\end{align}
\end{lem}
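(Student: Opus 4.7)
The plan is to reduce this $L^1$ bound to a numerical inequality about the eigenvalues $\{\mu_j^{(\Lambda)}\}_j$ and then to argue it by a $\delta$-threshold decomposition. First I would identify $\bra \cN_\Lambda K_x, K_x \ket$ explicitly. Since $K_x \in L$ by the reproducing property (\ref{eq:repprop}), we have $\cK K_x = K_x$, so by self-adjointness of $\cK$,
\[ \bra \cN_\Lambda K_x, K_x \ket = \bra \cK \cP_\Lambda K_x, K_x \ket = \bra \cP_\Lambda K_x, K_x \ket = \int_\Lambda |K(x,y)|^2 \, d\lambda(y), \]
and expanding $\trivial_\Lambda \overline{K(x,\cdot)}$ in the orthonormal basis $\{\Phi_j^{(\Lambda)}\}$ and using (\ref{eq:exprpsi}) together with Parseval, this equals $\sum_j \mu_j^{(\Lambda)} |\Psi_j^{(\Lambda)}(x)|^2$. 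Substituting the definition of $\rho_\Lambda$, the difference becomes
\[ \rho_\Lambda(x) - \bra \cN_\Lambda K_x, K_x \ket = \sum_{j \leq N_\Lambda}(1-\mu_j^{(\Lambda)}) |\Psi_j^{(\Lambda)}(x)|^2 - \sum_{j > N_\Lambda}\mu_j^{(\Lambda)}|\Psi_j^{(\Lambda)}(x)|^2, \]
and the triangle inequality together with $\||\Psi_j^{(\Lambda)}|^2\|_1 = \mathbf{1}(\mu_j^{(\Lambda)}>0)$ (since $\cW_\Lambda$ is a partial isometry with kernel $\ker \cM_\Lambda$) reduces the $L^1$ norm to the numerical quantity
\[ A := \sum_{\substack{j \leq N_\Lambda \\ \mu_j^{(\Lambda)} > 0}}(1-\mu_j^{(\Lambda)}) + \sum_{j > N_\Lambda}\mu_j^{(\Lambda)}. \]

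Next I would bound $A$. The crucial algebraic identity is
\[ A = (N_\Lambda - \tr \cM_\Lambda) + 2 \sum_{j > N_\Lambda}\mu_j^{(\Lambda)}, \]
whose first summand lies in $[0, 1)$ by the choice of $N_\Lambda$ and produces the additive $1$ in the stated bound. To control $\sum_{j > N_\Lambda}\mu_j^{(\Lambda)}$, I would split by the threshold $1-\delta$. Setting $m = |\{j : \mu_j^{(\Lambda)} \geq 1-\delta\}|$, one has $m(1-\delta) \leq \tr \cM_\Lambda = \E[\Xi(\Lambda)]$. In the regime $m > N_\Lambda$, all of $\mu_1^{(\Lambda)}, \ldots, \mu_{N_\Lambda}^{(\Lambda)}$ are $\geq 1-\delta$, which gives $\sum_{j \leq N_\Lambda}\mu_j^{(\Lambda)} \geq N_\Lambda(1-\delta)$ and hence $\sum_{j > N_\Lambda}\mu_j^{(\Lambda)} \leq N_\Lambda \delta - (N_\Lambda - \tr \cM_\Lambda) \leq \delta \E[\Xi(\Lambda)]$, accounting for the $2\delta \E[\Xi(\Lambda)]$ contribution. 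In the complementary regime $m \leq N_\Lambda$, every $\mu_j^{(\Lambda)}$ with $j > N_\Lambda$ satisfies $\mu_j^{(\Lambda)} < 1-\delta$, and the pointwise bounds $\mu_j \leq \mu_j(1-\mu_j)/\delta$ when $\mu_j \geq \delta$ and $\mu_j \leq \mu_j(1-\mu_j)/(1-\delta)$ when $\mu_j \leq \delta$ combine to yield a variance-controlled bound of the form $(1-\delta) C_\delta \var(\Xi(\Lambda))$.

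The main obstacle is producing exactly the coefficients $2\delta$ and $2(1-\delta) C_\delta$, which requires a careful case analysis that correctly attributes the excess count $(m - N_\Lambda)_+$ to $\delta \E[\Xi(\Lambda)]$ while the bulk variance contribution is absorbed into $(1-\delta) C_\delta \var(\Xi(\Lambda))$. The form $C_\delta = \max(1/\delta, 1/(1-\delta))$ arises precisely because it is the uniform constant in the inequality $\mu_j \leq C_\delta\, \mu_j(1-\mu_j)$ valid on $(0, 1-\delta]$, which handles the \emph{small} subrange $\mu_j \in (0, \delta]$ and the \emph{medium} subrange $\mu_j \in (\delta, 1-\delta]$ simultaneously.
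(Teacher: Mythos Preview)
Your reduction is exactly the paper's: you correctly identify $\bra \cN_\Lambda K_x,K_x\ket=\sum_j\mu_j^{(\Lambda)}|\Psi_j^{(\Lambda)}(x)|^2$ (the paper's Proposition~\ref{prop:innerprod1}) and arrive at the numerical quantity $A=(N_\Lambda-\E[\Xi(\Lambda)])+2\sum_{j>N_\Lambda}\mu_j^{(\Lambda)}$, which is the content of the paper's Proposition~\ref{prop:dif}. The difficulty is precisely where you locate it, in the final estimate, and there your Case~2 argument does not deliver the stated constant. On $(0,1-\delta]$ the sharpest uniform pointwise bound is $\mu\le C_\delta\,\mu(1-\mu)$, not $\mu\le (1-\delta)C_\delta\,\mu(1-\mu)$; for $\delta\le\tfrac12$ the latter would require $1-\mu\ge \delta/(1-\delta)$, which fails for $\mu$ close to $1-\delta$. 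Hence your case analysis only yields $\sum_{j>N_\Lambda}\mu_j^{(\Lambda)}\le C_\delta\var(\Xi(\Lambda))$ in Case~2, giving $A\le 1+2\delta\E[\Xi(\Lambda)]+2C_\delta\var(\Xi(\Lambda))$ rather than the lemma's $2(1-\delta)C_\delta$. That weaker bound is perfectly adequate for Theorem~\ref{thm:L1-limit}, but it is not the inequality stated.

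The paper obtains the factor $(1-\delta)$ by a different device: instead of bounding the tail $\sum_{j>N_\Lambda}\mu_j^{(\Lambda)}$ pointwise, it bounds $\sum_{j\le N_\Lambda}\mu_j^{(\Lambda)}$ from below via
\[
\sum_{j\le N_\Lambda}\mu_j^{(\Lambda)}\ \ge\ (1-\delta)\min\{N_\Lambda,\,N_\Lambda^{(\delta)}\}\ \ge\ (1-\delta)\bigl(\E[\Xi(\Lambda)]-C_\delta\var(\Xi(\Lambda))\bigr),
\]
where the second inequality uses the counting estimate $|N_\Lambda^{(\delta)}-\E[\Xi(\Lambda)]|\le C_\delta\var(\Xi(\Lambda))$ (Proposition~\ref{prop:Ndelta}, which is Lemma~3.3 of \cite{AGR16}). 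The factor $(1-\delta)$ arises naturally here because each of the first $\min\{N_\Lambda,N_\Lambda^{(\delta)}\}$ eigenvalues exceeds $1-\delta$, and $C_\delta$ enters only through the count $N_\Lambda^{(\delta)}$, not through a pointwise eigenvalue inequality. Your Case~1 is in fact the branch $\min=N_\Lambda$ of this argument; what is missing in your Case~2 is the lower bound on $m=N_\Lambda^{(\delta)}$ furnished by Proposition~\ref{prop:Ndelta}.
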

The second Lemma is a simple estimate for the $L^1$ norm of $G$.
\begin{lem}\label{lem:G}
We have
\[
 \|G\|_{1} \le 2 \var(\Xi(\La)). 
\] 
\end{lem}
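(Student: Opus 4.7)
The approach is a direct unfolding of the quantity $\langle \cN_\La K_x, K_x\rangle$ into an integral of the kernel, combined with the variance formula \eqref{eq:variance2}. The key observation is that for an orthogonal projection, the kernel is reproducing, so $K_x$ is a fixed point of $\cK$.

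First, I would rewrite the inner product defining $G$. Since $\cN_\La = \cK \cP_\La$ and $\cK$ is self-adjoint,
\begin{align*}
\langle \cN_\La K_x, K_x \rangle = \langle \cP_\La K_x, \cK K_x \rangle.
\end{align*}
The reproducing property \eqref{eq:repprop} gives $(\cK K_x)(z) = \int_S K(z,y) K(y,x) d\lambda(y) = K(z,x) = K_x(z)$, so $\cK K_x = K_x$. Therefore
\begin{align*}
\langle \cN_\La K_x, K_x \rangle = \langle \cP_\La K_x, K_x \rangle = \int_\La |K(y,x)|^2 d\lambda(y),
\end{align*}
and hence $G(x) = K(x,x) \trivial_\La(x) - \int_\La |K(y,x)|^2 d\lambda(y)$.

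Next I would split according to whether $x \in \La$. Applying the reproducing property once more at the diagonal, $K(x,x) = \int_S |K(y,x)|^2 d\lambda(y)$. If $x \in \La$, this yields $G(x) = \int_{S\setminus \La} |K(y,x)|^2 d\lambda(y) \ge 0$, while if $x \notin \La$ then $G(x) = -\int_\La |K(y,x)|^2 d\lambda(y) \le 0$. Taking absolute values and integrating in $x$,
\begin{align*}
\|G\|_1 = \int_\La \int_{S \setminus \La} |K(y,x)|^2 \, d\lambda(y) d\lambda(x) + \int_{S \setminus \La} \int_\La |K(y,x)|^2 \, d\lambda(y) d\lambda(x).
\end{align*}
By Fubini and the hermitian symmetry $|K(y,x)|^2 = |K(x,y)|^2$, both double integrals coincide with $\int_\La \int_{S \setminus \La} |K(x,y)|^2 d\lambda(x) d\lambda(y) = \var(\Xi(\La))$ in view of \eqref{eq:variance2}. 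This yields $\|G\|_1 = 2 \var(\Xi(\La))$, which is in fact the equality version of the stated bound.

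There is no real obstacle here: the only substantive step is recognising $\langle \cN_\La K_x, K_x\rangle$ as the partial $L^2$-norm $\int_\La |K(y,x)|^2 d\lambda(y)$, which follows as soon as one moves $\cK$ to the right by self-adjointness and invokes the reproducing property. Everything afterwards is a routine application of Fubini and the variance identity proved in Section \ref{sec:dpp}.
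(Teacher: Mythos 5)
Your proof is correct and follows essentially the same route as the paper: you identify $\bra \cN_\La K_x, K_x \ket = \int_\La |K(x,y)|^2\, d\lambda(y)$ (the paper's Proposition \ref{prop:G}), apply the reproducing property on the diagonal, and compare with the variance identity. The only difference is cosmetic: by tracking the sign of $G$ on $\La$ and its complement you get the exact equality $\|G\|_1 = 2\var(\Xi(\La))$ via \eqref{eq:variance2}, whereas the paper bounds $|G(x)|$ by the triangle inequality and invokes \eqref{eq:variance} --- and since the integrand $\{\trivial_\La(x)-\trivial_\La(y)\}|K(x,y)|^2$ has constant sign in $y$ for each fixed $x$, that bound is in fact also an equality.
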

\subsubsection{Proof of Theorem \ref{thm:L1-limit}}
We now prove Theorem \ref{thm:L1-limit}, assuming Lemmas \ref{lem:Ndelta} and \ref{lem:G}. 
\begin{proof}[Proof of Theorem \ref{thm:L1-limit}]
Fix a number $\delta \in (0,1)$ and an exhaustion $\{ \Lambda^{(n)} \}_{n \in \N}$ of $S$ such that the determinantal point process $\P$ is hyperuniform along this exhaustion. By the inequality (\ref{estim1}), Lemma \ref{lem:Ndelta} and Lemma \ref{lem:G}, we have that, for all $n \in \N$
\begin{align*}
||\rho_{\La^{(n)}} - K(\cdot,\cdot)\trivial_{\La^{(n)}} ||_1 \leq 1 + 2\delta\E[\Xi(\La^{(n)})] + 2\{(1-\delta)C_\delta+1\}\var(\Xi(\La^{(n)})).
\end{align*}
Dividing this inequality by $N_{\La^{(n)}}$ and using the fact that the point process $\P$ is hyperuniform along the exhaustion $\{\La^{(n)} \}_{n \in \N}$, we obtain 
\begin{align*}
\limsup_{n \rightarrow + \infty} \frac{1}{N_{\La^{(n)}}} ||\rho_{\La^{(n)}} - K(\cdot,\cdot)\trivial_{\La^{(n)}} ||_1 \leq 2\delta.
\end{align*}
Since $\delta>0$ can be chosen arbitrarly small, the Theorem is proved.
\end{proof}
\subsubsection{Proof of Corollary \ref{cor:L1-lim1}}
For $R>0$, we have 
\begin{align*}
||\rho_{R\Lambda}(R \cdot) - \trivial_\La||_1 &= \int_{\R^d}| \rho_{R\La}(Rx) - \trivial_\La(x) |dx \\
&=\int_{\R^d} | \rho_{R\La}(Rx) - \trivial_{R\La}(Rx) | dx.
\end{align*}
Performing the change of variable $x \mapsto Rx$ leads to 
\begin{align} \label{eq:pfcor1}
||\rho_{R\Lambda}(R \cdot) - \trivial_\La||_1 &= \frac{1}{R^d} \int_{R^d} |\rho_{R\La}(x) - \trivial_{R\La}(x)|dx \\
&= \frac{1}{R^d} || \rho_{R\La} - \trivial_{R\La} ||_1.
\end{align}
Since $\La$ is bounded, there exists $C>0$ such that 
\begin{align} \label{ineq:pfcor1}
\text{Leb}(R\La) \le CR^d
\end{align}
for all $R>0$. The fact that $K(x,x) = 1$ for all $x \in \R^d$ and formula (\ref{eq:expectation}) imply that 
\begin{align*}
\E[ \Xi(R\La)] = \text{Leb}(R\La).
\end{align*}
We thus obtain from (\ref{eq:pfcor1}) and (\ref{ineq:pfcor1}) that
\begin{align*}
||\rho_{R\Lambda}(R \cdot) - \trivial_\La||_1 &\leq \frac{C}{\text{Leb}(R\La)} || \rho_{R\La} - \trivial_{R\La} ||_1 \\
&= \frac{C}{\E[\Xi(R\La)]}|| \rho_{R\La} - \trivial_{R\La} ||_1.
\end{align*}
Since the determinantal point process with kernel $K$ is hyperuniform along the exhaustion $\{R\La\}_{R>0}$, Theorem \ref{thm:L1-limit} implies that
\[\frac{C}{\E[\Xi(R\La)]}|| \rho_{R\La} - \trivial_{R\La} ||_1 \rightarrow 0
\]
as $R \rightarrow + \infty$, which concludes the proof.
\subsection{Proof of Lemmas \ref{lem:Ndelta} and \ref{lem:G}} \label{subsec:prooflem}
\subsubsection{Two expressions for $\bra \cN_\La K_x, K_x \ket$}
We give here two different expressions for the inner product $\bra \cN_\La K_x, K_x \ket$. Proposition \ref{prop:innerprod1} will be used in the proof of Lemma \ref{lem:Ndelta}, while Lemma \ref{lem:G} will be a consequence of Proposition \ref{prop:G}.
\begin{prop}\label{prop:innerprod1}For any $x \in S_0$, we have
\begin{align} \label{eq:eqinnerprod1}
\bra \cN_\La K_x , K_x \ket = \sum_{j=1}^{+ \infty} \mu_j^{(\La)} \left| \Psi_j^{(\La)} (x) \right|^2. 
\end{align}
\end{prop}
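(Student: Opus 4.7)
The plan is to substitute the polar decomposition (\ref{eq:polardecompo}) into $\cN_\La K_x$, pair the result with $K_x$, and then reduce the resulting series by identifying the scalar factors using the reproducing-type identity $\cK f(x) = \bra f, K_x \ket$ together with the fact that $\cK$ is an orthogonal projection. The latter is what forces $\Psi_j^{(\La)} \in \ran(\cK)$ and lets us express the values of $\cK \Phi_j^{(\La)}$ at $x$ in terms of $\Psi_j^{(\La)}(x)$.

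Concretely, applying (\ref{eq:polardecompo}) to $K_x$ (which makes sense since $\cN_\La$ is Hilbert--Schmidt, as $\cM_\La = \cN_\La^* \cN_\La$ is trace class) and then pairing with $K_x$ gives
\[
\bra \cN_\La K_x, K_x \ket = \sum_{j=1}^{+\infty} \sqrt{\mu_j^{(\La)}}\, \bra K_x, \Phi_j^{(\La)}\ket\, \bra \Psi_j^{(\La)}, K_x \ket.
\]
For each $j$ with $\mu_j^{(\La)} > 0$, I would compute both inner products. First, since any eigenfunction $\Phi_j^{(\La)}$ of $\cM_\La = \cP_\La \cK \cP_\La$ with nonzero eigenvalue is supported in $\La$, we have $\cK \Phi_j^{(\La)} = \cK \cP_\La \Phi_j^{(\La)} = \cN_\La \Phi_j^{(\La)} = \sqrt{\mu_j^{(\La)}}\,\Psi_j^{(\La)}$ by (\ref{eq:exprpsi}), so the reproducing identity yields $\bra K_x, \Phi_j^{(\La)} \ket = \overline{\cK \Phi_j^{(\La)}(x)} = \sqrt{\mu_j^{(\La)}}\,\overline{\Psi_j^{(\La)}(x)}$. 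Second, since $\Psi_j^{(\La)} = \cN_\La \Phi_j^{(\La)}/\sqrt{\mu_j^{(\La)}} \in \ran(\cK)$, we get $\cK \Psi_j^{(\La)} = \Psi_j^{(\La)}$ and hence $\bra \Psi_j^{(\La)}, K_x \ket = \cK \Psi_j^{(\La)}(x) = \Psi_j^{(\La)}(x)$. Substituting both into the series collapses each summand to $\mu_j^{(\La)}\,|\Psi_j^{(\La)}(x)|^2$, while indices with $\mu_j^{(\La)} = 0$ contribute nothing, yielding (\ref{eq:eqinnerprod1}).

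The point I expect to be most delicate is the pointwise (as opposed to $\lambda$-a.e.) validity of $\cK f(x) = \bra f, K_x \ket$ at the prescribed $x$: this is exactly what the exceptional null set $S \setminus S_0$ was introduced to control in Section \ref{sec:polardecompo}, and nothing more than bookkeeping is needed once the statement is restricted to $S_0$. As a cross-check, the same identity can be derived without a singular-value expansion by writing $\bra \cN_\La K_x, K_x\ket = \bra \trivial_\La K_x, K_x \ket = \int_\La |K(y,x)|^2 d\lambda(y)$ using $\cK K_x = K_x$, and then applying Parseval in the orthonormal basis $\{\Phi_j^{(\La)}\}$ to $\trivial_\La K_x$ after observing from (\ref{eq:exprpsi}) that $\sqrt{\mu_j^{(\La)}}\,\Psi_j^{(\La)}(x) = \bra \trivial_\La \Phi_j^{(\La)}, K_x\ket$.
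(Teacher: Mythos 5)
Your proposal is correct and follows essentially the same route as the paper: expand $\cN_\La K_x$ via the polar decomposition (\ref{eq:polardecompo}), then evaluate $\bra K_x,\Phi_j^{(\La)}\ket=\sqrt{\mu_j^{(\La)}}\,\overline{\Psi_j^{(\La)}(x)}$ using (\ref{eq:exprpsi}) with $\cP_\La\Phi_j^{(\La)}=\Phi_j^{(\La)}$, and $\bra\Psi_j^{(\La)},K_x\ket=\Psi_j^{(\La)}(x)$ from $\Psi_j^{(\La)}\in\ran(\cK)$, exactly as in the paper's proof of Proposition \ref{prop:innerprod1}. Your closing cross-check via $\bra\cN_\La K_x,K_x\ket=\int_\La|K(x,y)|^2\,d\lambda(y)$ is precisely the paper's Proposition \ref{prop:G}, so nothing there is new, but it is a sound consistency check.
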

\begin{proof} From the polar decomposition (\ref{eq:polardecompo}), we have for all $x \in S_0$ that
\begin{align} \label{eq:decompoKx}
\bra \cN_\La K_x, K_x \ket = \sum_{j=1}^{+ \infty} \sqrt{\mu_j^{(\La)}} \bra K_x, \Phi_j^{(\La)} \ket \bra \Psi_j^{(\La)} , K_x \ket.
\end{align}
Let $j \in \N$. Since $\Psi_j^{(\La)} \in \ran(\cK)$, we have for all $x \in S_0$ that
\begin{align} \label{eq:psiprodKx}
\bra \Psi_j^{(\La)}, K_x \ket = \Psi_j^{(\La)} (x).
\end{align}
Now from (\ref{eq:exprpsi}), and since $\cP_\La \Phi_j^{(\La)} = \Phi_j^{(\La)}$, we have 
\begin{align*}
\cK \Phi_j^{(\La)} = \sqrt{\mu_j^{(\La)}}\Psi_j^{(\La)},
\end{align*}
and thus, we obtain that for all $x \in S_0$ that
\begin{align} \label{eq:phiprodKx}
\bra \Phi_j^{(\La)} , K_x \ket = \sqrt{\mu_j^{(\La)}} \Psi_j^{(\La)} (x).
\end{align}
Inserting equalities (\ref{eq:phiprodKx}) and (\ref{eq:psiprodKx}) into formula (\ref{eq:decompoKx}), we obtain equation (\ref{eq:eqinnerprod1}) and the proof is complete.
\end{proof}
\begin{prop} \label{prop:G}For all $x \in S_0$, we have
\begin{align}
\bra \cN_\La K_x, K_x \ket = \int_\La |K(x,y) | ^2 d\lambda(y).
\end{align}
\end{prop}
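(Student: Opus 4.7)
The plan is to rewrite the inner product $\bra \cN_\La K_x, K_x \ket$ by exploiting the definition $\cN_\La = \cK \cP_\La$, the self-adjointness of $\cK$ (as an orthogonal projection), and the reproducing property of the kernel $K$ recorded in (\ref{eq:repprop}).

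First I would observe that for any $x \in S_0$, the function $K_x : y \mapsto K(y,x)$ belongs to the range of $\cK$. Indeed, using the reproducing property,
\begin{align*}
(\cK K_x)(z) = \int_S K(z,y) K(y,x) d\lambda(y) = K(z,x) = K_x(z),
\end{align*}
so $\cK K_x = K_x$. Next, using $\cN_\La = \cK \cP_\La$ together with $\cK^* = \cK$, I would move $\cK$ onto the second factor:
\begin{align*}
\bra \cN_\La K_x, K_x \ket = \bra \cK \cP_\La K_x, K_x \ket = \bra \cP_\La K_x, \cK K_x \ket = \bra \cP_\La K_x, K_x \ket.
\end{align*}

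Finally, unfolding the definition of the restriction operator $\cP_\La$ and of the inner product yields
\begin{align*}
\bra \cP_\La K_x, K_x \ket = \int_S \trivial_\La(y) K(y,x) \overline{K(y,x)} d\lambda(y) = \int_\La |K(y,x)|^2 d\lambda(y),
\end{align*}
and since $K$ is Hermitian ($K(x,y) = \overline{K(y,x)}$, as $\cK$ is self-adjoint), we have $|K(y,x)|^2 = |K(x,y)|^2$, giving the claimed formula. There is no real obstacle here; the only point to handle cleanly is justifying $\cK K_x = K_x$ pointwise on $S_0$, which is precisely the reproducing property already recorded for orthogonal projection kernels.
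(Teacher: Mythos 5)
Your proof is correct and follows essentially the same route as the paper: use $\cK^*=\cK$ and the reproducing property $\cK K_x = K_x$ to reduce $\bra \cN_\La K_x, K_x\ket$ to $\bra \cP_\La K_x, K_x\ket$, then unfold the inner product. Your explicit justification of $\cK K_x = K_x$ and of the symmetry $|K(y,x)|^2=|K(x,y)|^2$ only makes explicit what the paper leaves implicit.
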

\begin{proof}Since the operator $\cK$ is self-adjoint and since $\cK K_x = K_x$ for all $x \in S_0$, we have
\begin{align*}
\bra \cN_\La K_x, K_x \ket = \bra \cK \cP_\La K_x , K_x \ket = \bra\cP_\La K_x, K_x \ket = \int_S \trivial_\La(y) |K(x,y)|^2 d\lambda(y),
\end{align*}
which is the deisred result.
\end{proof}
\subsubsection{Proof of Lemma \ref{lem:Ndelta}}
We start with the following Proposition.
\begin{prop} \label{prop:dif}We have 
\begin{equation}
\|
\rho_{\La} - \bra \cN_{\La} K_{\cdot}, K_{\cdot}\ket\|_{1} 
\le N_{\La} + \E[\Xi(\Lambda)] - 
2 \sum_{j=1}^{N_{\La}} \mu_j^{(\La)}. 
\label{eq:difference1} 
\end{equation}
\end{prop}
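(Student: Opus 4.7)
The plan is to write the pointwise difference $\rho_{\La}(x)-\bra\cN_{\La}K_{x},K_{x}\ket$ as a single series in $|\Psi_{j}^{(\La)}(x)|^{2}$ by combining Definition \ref{def:accspec} with the expansion (\ref{eq:eqinnerprod1}) from Proposition \ref{prop:innerprod1}, and then integrate term by term using the triangle inequality.

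Concretely, I would subtract the two expansions, separating indices $j \le N_{\La}$ (where both sums contribute, yielding the coefficient $1-\mu_{j}^{(\La)}$) from indices $j > N_{\La}$ (where only $\bra\cN_{\La}K_{x},K_{x}\ket$ contributes, yielding the coefficient $-\mu_{j}^{(\La)}$). Applying $\|\cdot\|_{1}$ and the triangle inequality then reduces the problem to bounding each $\bigl\|\,|\Psi_{j}^{(\La)}|^{2}\bigr\|_{1} = \|\Psi_{j}^{(\La)}\|_{2}^{2}$.

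The one substantive step is the observation that $\|\Psi_{j}^{(\La)}\|_{2} \le 1$ for every $j$. This is where I invoke the construction of $\cW_{\La}$ in the polar decomposition $\cN_{\La} = \cW_{\La}\sqrt{\cM_{\La}}$ from Section \ref{sec:polardecompo}: when $\mu_{j}^{(\La)} > 0$, the eigenvector $\Phi_{j}^{(\La)}$ lies in $\overline{\ran(\sqrt{\cM_{\La}})}$, on which $\cW_{\La}$ acts as an isometry, so $\|\Psi_{j}^{(\La)}\|_{2} = \|\Phi_{j}^{(\La)}\|_{2} = 1$; when $\mu_{j}^{(\La)} = 0$, the definition forces $\Psi_{j}^{(\La)} = 0$, and such indices contribute nothing to either sum appearing in the estimate.

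Combining, the $L^{1}$ norm is bounded by
\begin{equation*}
\sum_{j=1}^{N_{\La}}(1-\mu_{j}^{(\La)}) + \sum_{j > N_{\La}}\mu_{j}^{(\La)}.
\end{equation*}
Using (\ref{eq:expectation}) to rewrite $\sum_{j > N_{\La}}\mu_{j}^{(\La)} = \E[\Xi(\La)] - \sum_{j=1}^{N_{\La}}\mu_{j}^{(\La)}$ and simplifying yields exactly the right-hand side of (\ref{eq:difference1}). The argument is routine once the spectral expansion and the partial-isometry property of $\cW_{\La}$ are in place, so I do not anticipate any real obstacle; the only bookkeeping point to watch is the treatment of indices with $\mu_{j}^{(\La)} = 0$, which is handled uniformly by $\|\Psi_{j}^{(\La)}\|_{2} \le 1$.
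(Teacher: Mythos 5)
Your proposal is correct and follows essentially the same route as the paper: subtract the spectral expansion of $\bra \cN_\La K_x,K_x\ket$ from $\rho_\La(x)$, apply the triangle inequality termwise, integrate using $\|\Psi_j^{(\La)}\|_2\le 1$, and convert $\sum_j \mu_j^{(\La)}$ to $\E[\Xi(\La)]$ via (\ref{eq:expectation}). Your handling of indices with $\mu_j^{(\La)}=0$ (where $\Psi_j^{(\La)}=0$, so the norm is $\le 1$ rather than $=1$) is in fact slightly more careful than the paper's wording, but it changes nothing in the argument.
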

\begin{proof}
By Proposition \ref{prop:innerprod1}, we have for all $x \in S_0$
\[
\rho_{\La}(x) - \bra \cN_{\La} K_x, K_x\ket
= \sum_{j=1}^{N_{\La}} (1-\mu_j^{(\La)}) |\Psi_j^{(\La)}(x)|^2
- \sum_{j> N_{\La}} \mu_j^{(\La)}
 |\Psi_j^{(\La)}(x)|^2,
\]
which implies the inequality
\begin{equation}
|\rho_{\La}(x) - \bra \cN_{\La} K_x, K_x\ket |
\leq \sum_{j=1}^{N_{\La}} (1-\mu_j^{(\La)}) |\Psi_j^{(\La)}(x)|^2
+ \sum_{j> N_{\La}} \mu_j^{(\La)}
 |\Psi_j^{(\La)}(x)|^2. \label{ineq:1}
\end{equation}
Using the fact that the function $\Psi_j^{(\La)}$ has $L^2$ norm $1$ for all $j \in \N$, integrating over $x \in S$ in (\ref{ineq:1}) gives
\begin{align*}
\|\rho_{\La} - \bra \cN_{\La} K_\cdot, K_\cdot\ket\|_{1}
&\le \sum_{j=1}^{N_{\La}} (1-\mu_j^{(\La)}) 
+ \sum_{j> N_{\La}} \mu_j^{(\La)} \\
&= N_{\La} - 2 \sum_{j=1}^{N_{\La}} \mu_j^{(\La)} 
+ \sum_{j=1}^{\infty} \mu_j^{(\La)} \\
&= N_{\La} + \E[\Xi(\La)] -2 \sum_{j=1}^{N_{\La}} \mu_j^{(\La)}, 
\end{align*}
where we used (\ref{eq:expectation}) for the last line. The proof is complete.
\end{proof}
For $\delta \in (0,1)$, let $N_\La^\delta$ be the number of eigenvalues of $\cM_\La$ greater than $1-\delta$:
\[
N_{\Lambda}^{(\delta)} := \#\{j \ge 1 : \mu_j^{(\La)} > 1-\delta\}.  
\]
Lemma 3.3 in \cite{AGR16} can be rephrased as follows. Recall that the constant $C_\delta$ has been defined in (\ref{eq:defcdelta}).
\begin{prop}\label{prop:Ndelta}
For $\delta \in (0,1)$, one has 
\begin{align}
 |N_{\La}^{(\delta)} - \E[\Xi(\Lambda)]| 
\le C_\delta \var(\Xi(\La)).
\end{align}
\end{prop}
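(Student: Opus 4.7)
The plan is to express the difference $N_\Lambda^{(\delta)} - \E[\Xi(\Lambda)]$ in terms of the eigenvalues $\mu_j^{(\Lambda)}$ and then compare it with the variance, which by (\ref{eq:variance0}) equals $\var(\Xi(\La)) = \tr(\cM_\La - \cM_\La^2) = \sum_j \mu_j^{(\La)}(1-\mu_j^{(\La)})$. The strategy is a simple split of the spectrum into ``large'' eigenvalues (those exceeding $1-\delta$) and ``small'' ones (the remaining ones), together with the elementary observation that on each piece the quantity $\mu_j^{(\La)}(1-\mu_j^{(\La)})$ dominates the relevant linear quantity up to the multiplicative constants $1/(1-\delta)$ and $1/\delta$, respectively. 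These are exactly the two numbers whose maximum defines $C_\delta$.

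Concretely, I would set $L = \{ j \ge 1 : \mu_j^{(\La)} > 1-\delta\}$ and $S_{\mathrm{small}} = \{ j \ge 1 : \mu_j^{(\La)} \le 1-\delta\}$, so that $N_\Lambda^{(\delta)} = |L|$. Using $\E[\Xi(\Lambda)] = \sum_j \mu_j^{(\La)}$ from (\ref{eq:expectation}), I would write
\begin{align*}
N_\Lambda^{(\delta)} - \E[\Xi(\Lambda)] = \sum_{j \in L} (1-\mu_j^{(\La)}) - \sum_{j \in S_{\mathrm{small}}} \mu_j^{(\La)},
\end{align*}
and then apply the triangle inequality to get $|N_\Lambda^{(\delta)} - \E[\Xi(\Lambda)]| \le \sum_{j \in L}(1-\mu_j^{(\La)}) + \sum_{j \in S_{\mathrm{small}}} \mu_j^{(\La)}$.

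For $j \in L$ we have $\mu_j^{(\La)} > 1-\delta$, hence $1-\mu_j^{(\La)} \le \mu_j^{(\La)}(1-\mu_j^{(\La)})/(1-\delta)$; symmetrically, for $j \in S_{\mathrm{small}}$ we have $1-\mu_j^{(\La)} \ge \delta$, hence $\mu_j^{(\La)} \le \mu_j^{(\La)}(1-\mu_j^{(\La)})/\delta$. Summing these termwise bounds and replacing each of $1/(1-\delta)$ and $1/\delta$ by their maximum $C_\delta$, I obtain
\begin{align*}
|N_\Lambda^{(\delta)} - \E[\Xi(\Lambda)]| \le C_\delta \sum_{j \ge 1} \mu_j^{(\La)}(1-\mu_j^{(\La)}) = C_\delta \var(\Xi(\La)),
\end{align*}
which is the desired bound. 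There is no real obstacle here: the only subtle point is identifying the correct way to compare linear quantities in $\mu_j^{(\La)}$ with the quadratic quantity $\mu_j^{(\La)}(1-\mu_j^{(\La)})$ on each piece of the spectrum, and this is forced by the very definitions of $L$, $S_{\mathrm{small}}$, and $C_\delta$.
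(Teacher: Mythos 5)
Your proof is correct and is essentially the paper's argument (which itself follows Lemma 3.3 of \cite{AGR16}): the paper packages your two casewise bounds into the single inequality $|G_\delta(t)|\le C_\delta\,t(1-t)$ for the function $G_\delta(t)=-t$ on $[0,1-\delta]$, $1-t$ on $(1-\delta,1]$, and then writes $N_\La^{(\delta)}-\E[\Xi(\La)]=\tr G_\delta(\cM_\La)$, which is exactly your eigenvalue-by-eigenvalue split expressed via functional calculus. No gap.
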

\begin{proof}
We here give a proof for the reader's convienience though 
the proof is exactly the same as that of Lemma 3.3 in
 \cite{AGR16}. Let $G_{\delta} : [0,1] \to [-1,1]$ be defined
 by 
\[
 G_{\delta}(t) = 
\begin{cases}
 -t & \text{for $t \in [0,1-\delta]$}, \\
 1-t & \text{for $t \in (1-\delta,1]$}.
\end{cases}
\]
It is easy to see that $|G_{\delta}(t)| \le C_{\delta} t(1-t)$ for all $t \in [0,1]$. We also have 
\[
N_{\La}^{(\delta)} - \E[\Xi(\Lambda)]
= \tr(\trivial_{(1-\delta, 1]}(\cM_{\La}) - \cM_{\La} )
= \tr G_{\delta}(\cM_{\La}),
\]
and hence 
\[
|N_{\La}^{(\delta)} - \E[\Xi(\La)]| 
\le C_{\delta} \tr ( \cM_{\La} (I - \cM_{\La})) = \var ( \Xi(\La)),
\]
where we used (\ref{eq:variance0}) for the latter equality. The proof is complete.
\end{proof}
We can now conclude the proof of Lemma \ref{lem:Ndelta}
\begin{proof}[Proof of Lemma \ref{lem:Ndelta}]
We first give a minoration of the sum $\sum_{j=1}^{N_\Lambda} \mu_j^{(\Lambda)}$, using Proposition \ref{prop:Ndelta}. Since the eigenvalues $\mu_j^{(\Lambda)}$ are non-negative and non-increasingly ordered, and by definition of $N_\La^{(\delta)}$, one has 
\begin{equation}\label{ineq:minor}
\sum_{j=1}^{N_\La} \mu_j^{(\La)} \geq \sum_{j=1}^{\min\{ N_\La, N_\La^{(\delta)} \}} \mu_j^{(\La)} \geq (1-\delta)\min\{ N_\La, N_\La^{(\delta)} \}.
\end{equation}
Proposition \ref{prop:Ndelta} implies 
\begin{align}\label{ineq:lemNdelta1}
N_\La^{(\delta)} \geq \E[\Xi(\La)] -C_\delta \var ( \Xi(\La)).
\end{align}
On the other hand, the inequality 
\begin{align}\label{ineq:lemNdelta2}
N_\La \geq \E[\Xi(\La)] - C_\delta \var(\Xi(\La))
\end{align}
easily follows from the definition of $N_\La$ and the positivity of $C_\delta \var(\Xi(\La))$. Inserting inequalities (\ref{ineq:lemNdelta1}) and (\ref{ineq:lemNdelta2}) into inequality (\ref{ineq:minor}), we obtain
\begin{align} \label{ineq:minor2}
\sum_{j=1}^{N_\La} \mu_j^{(\La)} \geq (1-\delta) \{ \E[\Xi(\Lambda)] - C_\delta \var(\Xi(\La))\}.
\end{align}
By Proposition \ref{prop:Ndelta} and inequality (\ref{ineq:minor2}), we have
\begin{align*}
||\rho_\La- \bra \cN_\La K_\cdot , K_\cdot \ket ||_1 &\leq N_\La  + \E[\Xi(\La)]+2(1-\delta) \{C_\delta \var(\Xi(\La))- \E[\Xi(\La)] \} \\
&= N_\La - \E[\Xi(\La)] + 2\delta \E[\Xi(\Lambda)] +2(1-\delta)C_\delta\var(\Xi(\La)) \\
& \leq 1+2\delta \E[\Xi(\Lambda)] +2(1-\delta)C_\delta\var(\Xi(\La)),
\end{align*}
where we used on the last line the fact that $N_\La- \E[\Xi(\La)] \leq 1$, which directly follows from the definition of $N_\La$. The proof is complete.
\end{proof}
\subsubsection{Proof of Lemma \ref{lem:G}}
Lemma \ref{lem:G} is a consequence of Proposition \ref{prop:G}
\begin{proof}[Proof of Lemma \ref{lem:G}] Using the reproducing property of the kernel $K$, we first see that for all $x \in S_0$
\begin{align*}
 K(x,x) \trivial_{\La}(x) = \trivial_{\La}(x) \int_S |K(x,y)|^2 d\lambda(y).
\end{align*}
By Proposition \ref{prop:G}, we now have for all $x \in S_0$ that
\begin{align*}
G(x) 
&= K(x,x) \trivial_{\La}(x) - \bra \cN_{\La} K_x,
 K_x \ket \\
&= 
\trivial_{\La}(x) \int_{S} |K(x,y)|^2
 d\la(y)
- \int_{S} \trivial_{\La}(y) |K(x,y)|^2
 d\la(y) \\
&= 
\int_{S} \{\trivial_{\La}(x) 
-\trivial_{\La}(y) 
\} |K(x,y)|^2
 d\la(y),
\end{align*}
and thus
\begin{align*}
|G(x)| \leq \int_{S} |\trivial_{\La}(x) 
-\trivial_{\La}(y) 
| |K(x,y)|^2
 d\la(y).
\end{align*}
Integrating over $x \in S$ and using formula (\ref{eq:variance}), we obtain
\[
 \|G\|_{1} 
\le \int_S\int_{S} 
|\trivial_{\La}(x) -\trivial_{\La}(y)| 
 |K(x,y)|^2 d\la(x)d\la(y)
= 2 \var(\Xi(\La)),
\]
and the proof is complete.
\end{proof}

\section{Proof of Proposition \ref{prop:hyperunifradial}} \label{sec:radial}
\subsection{Hyperuniformity along balls and proof of Proposition \ref{prop:hyperunifradial}}
Let $d\geq 1$ be an integer and let $\P$ be a radial determinantal point on $\R^d$, in the sense of Definition \ref{def:radial}. It will be seen in the proof of Proposition \ref{prop:hyperunifradial} below that it is sufficient to establish that the point process $\P$ is hyperuniform along balls with linearly increasing radii. We state this fact in the following Proposition we prove further in Section \ref{sec:hyperunifballs}. For $x \in \R^d$ and $r>0$, we denote by $B(x,r)$ the open ball centered at $x$ and of radius $r$.
\begin{prop}\label{prop:varballradial}For any $\kappa >0$, the radial determinantal point process $\P$ is hyperuniform along the exhaustion $\{B(0,R \kappa)\}_{R>0}$.
\end{prop}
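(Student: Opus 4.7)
The plan is to verify the two hyperuniformity conditions directly using formula (\ref{eq:variance2}) for the variance of a determinantal point process governed by an orthogonal projection, combined with the integrability of $\varphi(\|\cdot\|)$ that follows from the reproducing property.

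The first condition is immediate: since $K$ is normalized with $K(x,x) \equiv 1$, formula (\ref{eq:expectation}) yields $\E[\Xi(B(0, R\kappa))] = \mathrm{Leb}(B(0, R\kappa))$, which grows like $R^d$ and tends to $+\infty$.

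The key analytic input for the variance estimate is that $\varphi(\|\cdot\|) \in L^1(\R^d)$ with total mass $1$. Setting $z = x$ in the reproducing property (\ref{eq:repprop}) gives $\int_{\R^d} |K(x,y)|^2 \, dy = K(x,x) = 1$, and a change of variables $z = x-y$ combined with radiality rewrites this as $\int_{\R^d} \varphi(\|z\|) \, dz = 1$. In particular, for any fixed $\epsilon \in (0,1)$ the tail mass $\tau_R(\epsilon) := \int_{\|z\| > \epsilon R \kappa} \varphi(\|z\|) \, dz$ tends to $0$ as $R \to \infty$ by dominated convergence.

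With this tail estimate in hand, I would bound the variance via (\ref{eq:variance2}) by splitting the outer integration into a bulk and a boundary shell. For $x \in B(0, (1-\epsilon) R\kappa)$ every $y \notin B(0, R\kappa)$ satisfies $\|x-y\| > \epsilon R\kappa$, so the inner integral $\int_{\R^d \setminus B(0,R\kappa)} \varphi(\|x-y\|) \, dy$ is bounded by $\tau_R(\epsilon)$; for $x \in B(0, R\kappa) \setminus B(0, (1-\epsilon) R\kappa)$ it is trivially bounded by $1$. Combining the two contributions and dividing by $\E[\Xi(B(0, R\kappa))] = \mathrm{Leb}(B(0, R\kappa))$ yields
\begin{align*}
\frac{\var(\Xi(B(0, R\kappa)))}{\E[\Xi(B(0, R\kappa))]} \leq (1-\epsilon)^d \, \tau_R(\epsilon) + \bigl[1 - (1-\epsilon)^d\bigr].
\end{align*}
Letting $R \to \infty$ first and then $\epsilon \to 0$ concludes the proof.

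The only step requiring any real care is the derivation of the global integrability $\varphi(\|\cdot\|) \in L^1(\R^d)$; everything else is a clean split between bulk and shell, and the obstacle of controlling the inner integral \emph{uniformly} in $x$ inside the ball is handled precisely by the translation invariance of $|K(x,y)|^2$ encoded in the radial assumption. I would not need the negative-correlation inequality (\ref{ineq:negcor}) for this special case; that inequality is presumably reserved for the deduction of Proposition \ref{prop:hyperunifradial} from the ball case, where a general relatively compact open set $\La$ is sandwiched between two concentric balls and the variance on the intermediate annular regions is controlled using $\var \le \E$.
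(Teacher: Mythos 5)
Your argument is correct, but it follows a genuinely different route from the paper's. You bound (\ref{eq:variance2}) directly by splitting the ball into a bulk $B(0,(1-\epsilon)R\kappa)$ and a boundary shell: for bulk points the inner integral is at most the tail mass $\tau_R(\epsilon)$ of $\varphi(\|\cdot\|)$ beyond radius $\epsilon R\kappa$, for shell points it is at most the total mass $1$ coming from the reproducing property, and the resulting bound $(1-\epsilon)^d\,\tau_R(\epsilon)+\bigl[1-(1-\epsilon)^d\bigr]$ does the job after letting $R\to\infty$ and then $\epsilon\to 0$; all the estimates check out. The paper instead derives an \emph{exact} geometric identity (Proposition \ref{prop:varlunule}), $\var(\Xi(B(0,R)))=\int_{\R^d}\varphi(\|x\|)\,\mathrm{Leb}(B(0,R)^c\cap B(x,R))\,dx$, expands the lens volume in an explicit series (Proposition \ref{prop:intersecvolume}), and concludes with Lemma \ref{lem:dense} and dominated convergence over the series. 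Your soft bulk/shell argument is shorter and more elementary, and in fact it applies verbatim to the dilations $R\La$ of an arbitrary bounded open set (take as bulk the points of $R\La$ at distance greater than $\epsilon R$ from the complement), so it would yield Proposition \ref{prop:hyperunifradial} directly, bypassing the Vitali covering and the negative-correlation inequality (\ref{ineq:negcor}) that the paper actually uses for that deduction --- note that the paper's route to Proposition \ref{prop:hyperunifradial} is a covering argument, not the two-ball sandwich you speculate about. What the paper's heavier machinery buys is quantitative precision: the exact lens-volume formula is reused in Proposition \ref{prop:varsinc} to extract the $R^{d-1}\log R$ asymptotics of the number variance for the Euclidean (Fermi-sphere) ensembles, which a qualitative $o(R^d)$ bound like yours cannot provide.
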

\begin{proof}[Proof of Proposition \ref{prop:hyperunifradial}]
Let $\La \subset \R^d$ be open and bounded. By the Vitali covering Theorem, see e.g. \cite{EG92}, there exists a countable family of pairwise disjoint open balls $\{B(a_k,r_k)\}_{k\geq 1} \subset \La$ such that
\begin{align*}
\text{Leb}( \La \setminus \sqcup_{k \geq 1} B(a_k,r_k) ) =0.
\end{align*}
We thus have for all $R>0$ that
\begin{align*}
\var(\Xi(R\La)) = \var (\Xi( \sqcup_{k \geq 1} B(R  a_k,R r_k)).
\end{align*}
By the negative correlation property (\ref{ineq:negcor}), we deduce that
\begin{align} \label{ineq:vitali0}
\var(\Xi(R\La)) \leq \sum_{k \geq 1} \var \left( \Xi\left( B(R a_k,Rr_k)\right)\right) = \sum_{k \geq 1} \var \left( \Xi \left( B(0,R r_k) \right) \right),
\end{align}
where we used the translation invariance of the point process $\P$ for the last equality. Dividing both sides of inequality (\ref{ineq:vitali0}) by $\E[\Xi(\La)]$, we obtain
\begin{align} \label{ineq:vitali1}
\frac{ \var (\Xi( R\La) )}{ \E [\Xi(R\La)]} \leq \sum_{k \geq 1} \frac{ \var  \left( \Xi\left( B(0,Rr_k)\right)\right) }{\E [ \Xi(R \La)]}.
\end{align}
We prove that each term in the sum on the right hand side of (\ref{ineq:vitali1}) goes to 0 as $R$ goes to infinity. For any $k\geq 1$ and any $R>0$, we have $\text{Leb}(B(0,R r_k)) \leq \text{Leb}(R\La)$ and thus
\begin{align} \label{ineq:vitali2}
\frac{\var  \left( \Xi\left( B(0,Rr_k)\right)\right) }{\E [ \Xi(R \La)]} \leq \frac{\var  \left( \Xi\left( B(0,Rr_k)\right)\right) }{\E [ \Xi(B(0,Rr_k))]}.
\end{align}
By Proposition \ref{prop:varballradial}, the right hand side of (\ref{ineq:vitali2}) goes to 0 as $R$ goes to infinity. In order to apply the dominated convergence Theorem, observe that we have
\begin{align*}
\frac{\var  \left( \Xi\left( B(0,Rr_k)\right)\right) }{\E [ \Xi(R \La)]} \leq \frac{\E[\Xi( B(0,Rr_k))]}{\E[\Xi(R\La)]} = \frac{\text{Leb}(B(0,Rr_k))}{\text{Leb}(R\La)}= \frac{\text{Leb}(B(0,r_k))}{\text{Leb}(\La)},
\end{align*}
and that
\begin{align*}
\sum_{k \geq 1} \frac{\text{Leb}(B(0,r_k))}{\text{Leb}(\La)} = 1.
\end{align*}
The proof is complete.
\end{proof}
\subsection{Proof of Proposition \ref{prop:varballradial}} \label{sec:hyperunifballs}
Proposition \ref{prop:varballradial} will be a consequence of a geometric interpretation of the number variance given in terms of volumes of intersections of balls. This method is adapted from \cite[Prop. 1]{demnilazag}, concerning a family of determinant point processes in the Poincar\'e disk invariant under the action of M\"obius transformations. We denote by $B(0,R)^c$ the complementary set of the ball $B(0,R)$.
\begin{prop} \label{prop:varlunule} Let $\P$ be a radial point process on $\R^d$ with correlation kernel $K$. Let $\varphi : [0,+ \infty) \to [0,+\infty)$ be the function such that
\begin{align*}
|K(x,y)|^2= \varphi(||x-y||)
\end{align*}
for all $x,y \in \R^d$. Then we have for any $R>0$ the exact formula
\begin{align}
\var ( \Xi(B(0,R)) = \int_{\R^d} \varphi(||x||)\mathrm{Leb}(B(0,R)^c \cap B(x,R)) dx.
\end{align}
\end{prop}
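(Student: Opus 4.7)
The plan is to start from the explicit variance formula (\ref{eq:variance2}) and perform a Fubini-type change of variables, combined with the translation invariance of Lebesgue measure, to identify the inner integral as the announced volume of ball intersection.

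More concretely, I would first specialize (\ref{eq:variance2}) to the set $\La = B(0,R)$ and invoke the radial assumption $|K(x,y)|^{2} = \varphi(\|x-y\|)$ to write
\[
\var(\Xi(B(0,R))) = \int_{B(0,R)} \int_{B(0,R)^{c}} \varphi(\|x-y\|)\, dx\, dy.
\]
I would then rewrite this as an integral over all of $\R^{d} \times \R^{d}$ with the constraints encoded as indicator functions, and perform the substitution $u = x-y$ (for fixed $y$, so that $du\,dy = dx\,dy$). This produces
\[
\var(\Xi(B(0,R))) = \int_{\R^{d}} \varphi(\|u\|) \left( \int_{\R^{d}} \trivial_{B(0,R)}(y)\, \trivial_{B(0,R)^{c}}(y+u)\, dy \right) du.
\]

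The next step is to recognize the inner integral geometrically: the condition $y \in B(0,R)$ together with $y+u \in B(0,R)^{c}$ amounts to $y \in B(0,R) \cap B(-u,R)^{c}$, so the inner integral equals $\mathrm{Leb}\bigl(B(0,R) \cap B(-u,R)^{c}\bigr)$. By the translation invariance of Lebesgue measure (translating by $+u$), this equals $\mathrm{Leb}\bigl(B(u,R) \cap B(0,R)^{c}\bigr)$, which is exactly $\mathrm{Leb}(B(0,R)^{c} \cap B(u,R))$. Renaming $u$ to $x$ delivers the claimed formula.

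There is no real obstacle here beyond bookkeeping: the argument is purely a change of variables plus translation invariance, and the radiality assumption is used only once, to pull $|K(x,y)|^{2}$ outside as the function $\varphi(\|u\|)$ that depends solely on the new variable $u$. The mild care needed is in tracking signs (choosing $u = x-y$ versus $u = y-x$) so that the final intersection involves $B(x,R)$ and $B(0,R)^{c}$ in the order stated, rather than their mirror image; translation invariance makes the two equivalent.
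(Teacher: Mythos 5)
Your proof is correct and follows essentially the same route as the paper: both start from the variance formula (\ref{eq:variance2}), use radiality to replace $|K(x,y)|^2$ by $\varphi(\|x-y\|)$, perform the change of variable $u=x-y$ (with Tonelli justifying the interchange, as the integrand is nonnegative), and identify the inner integral as $\mathrm{Leb}(B(0,R)^c\cap B(u,R))$. The only cosmetic difference is that you resolve the mirror-image set $B(0,R)\cap B(-u,R)^c$ via translation invariance of Lebesgue measure, whereas the paper absorbs this directly into its Fubini rewriting of the integration domain.
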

\begin{proof}
From Formula (\ref{eq:variance2}), we have
\begin{align*}
\var( \Xi(B(0,R)) = \int_{y \in B(0,R)^c} \int_{x \in B(0,R)} \varphi(||x-y||) dx dy.
\end{align*}
Performing the change of variable $ x\mapsto x+y$, we obtain
\begin{align*}
\var( \Xi(B(0,R)) = \int_{y \in B(0,R)^c} \int_{x \in B(y,R)} \varphi(||x||)dxdy.
\end{align*}
By Fubini's Theorem and since we have
\begin{align*}
y \in \R^d \setminus B(0,R) , \hspace{0.1cm} x \in B(y,R) \hspace{0.1cm} \Leftrightarrow \hspace{0.1cm} x \in \R^d \setminus \{0\}, \hspace{0.1cm} y \in B(0.R)^c \cap B(x,R),
\end{align*}
we obtain
\begin{align*}
\var ( \Xi(B(0,R)) = \int_{\R^d} \varphi(||x||)\mathrm{Leb}(B(0,R)^c \cap B(x,R)) dx,
\end{align*}
and the proof is complete.
\end{proof}
The intersection volume can be explicitly computed. For a positive integer $d$, we denote by $c_d$ the volume of the unit ball $B(0,1) \subset \R^d$,
\[c_{d} = \frac{\pi^{d/2}}{\Gamma(d/2+1)}. \]
\begin{prop}\label{prop:intersecvolume}
For all $R>0$ and all $x \in B(0,2R)$, we have
\begin{equation}\label{eq:area-lunule}
\mathrm{Leb}(B(0,R)^c \cap B(x,R))=2c_{d-1}R^d . \sum_{k=0}^{+\infty}\frac{\left(-\frac{d-1}{2}\right)_k}{(2k+1)k!}\left( \frac{||x||}{2R}\right)^{2k+1},
\end{equation}
where we use the Pochhammer symbol
\begin{align*}
(\alpha)_k=\alpha(\alpha+1)...(\alpha+k-1).
\end{align*}
\end{prop}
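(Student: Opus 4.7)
The plan is to exploit the equal-radii symmetry of the two balls to reduce $\mathrm{Leb}(B(0,R)^c \cap B(x,R))$ to a one-dimensional integral, and then expand the integrand as a binomial series. By rotational invariance of Lebesgue measure, I may assume without loss of generality that $x = (t, 0, \ldots, 0)$ with $t := \|x\| \in [0, 2R)$.

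The key geometric observation is that, since both balls have the same radius, every point $y \in B(x,R)$ with $y_1 \leq t/2$ automatically satisfies $y \in B(0,R)$: indeed $\|y\|^2 = \|y - x\|^2 + 2 t y_1 - t^2 \leq \|y-x\|^2 < R^2$. Moreover, the reflection across the hyperplane $\{y_1 = t/2\}$ swaps the centres $0$ and $x$, hence preserves $B(0,R) \cap B(x,R)$, so the two halves of this intersection on either side of the hyperplane have equal Lebesgue measure. Combining these two facts yields
\[
\mathrm{Leb}(B(0,R) \cap B(x,R)) = 2\, \mathrm{Leb}(B(x,R) \cap \{y_1 \leq t/2\}).
\]
Translating by $-x$ identifies the right-hand side with twice the volume of the spherical cap $B(0,R) \cap \{z_1 \leq -t/2\}$, which I compute by slicing perpendicular to the first coordinate axis as $2 c_{d-1} \int_{t/2}^R (R^2 - u^2)^{(d-1)/2}\, du$. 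Subtracting from the full volume $\mathrm{Leb}(B(x,R)) = c_d R^d = 2 c_{d-1} \int_0^R (R^2 - u^2)^{(d-1)/2}\, du$ (the same slicing applied to the whole ball) produces the compact integral form
\[
\mathrm{Leb}(B(0,R)^c \cap B(x,R)) = 2 c_{d-1} \int_0^{t/2} (R^2 - u^2)^{(d-1)/2}\, du.
\]

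To pass to the series in the statement, I substitute $u = R v$ and apply the binomial theorem to write
\[
(1 - v^2)^{(d-1)/2} = \sum_{k=0}^{+\infty} \frac{(-(d-1)/2)_k}{k!}\, v^{2k},
\]
using the standard identity $\binom{\alpha}{k}(-1)^k = (-\alpha)_k / k!$ to convert falling-factorial binomial coefficients into the rising Pochhammer symbols of the statement. Since $t/(2R) < 1$, the series converges uniformly on $[0, t/(2R)]$, so termwise integration of $v^{2k}$ is legitimate and delivers exactly the displayed expansion. The only non-routine ingredient is the symmetry observation in the second paragraph that produces the factor $2$; the remaining steps are essentially mechanical, so I do not anticipate any serious obstacle.
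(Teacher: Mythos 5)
Your proof is correct and follows essentially the same route as the paper: the equal-radii reflection symmetry reduces the lens $B(0,R)\cap B(x,R)$ to twice a cap, slicing perpendicular to the axis gives a one-dimensional integral, and the binomial series is integrated termwise to produce the stated expansion. The only (harmless) difference is that you subtract the cap integral from the full-ball integral before expanding, so the series is integrated over $[0,\|x\|/(2R)]$, strictly inside the disc of convergence, which spares you the endpoint-convergence justification (via the behaviour of the coefficients $(-(d-1)/2)_k/k!$) that the paper needs when integrating the expansion up to $v=1$.
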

\begin{proof}
By symmetry, we can assume that $x = r(1,0,...,0)^t$, where $0\leq r<2R$. We obviously have
\begin{align*}
\mathrm{Leb}(B(0,R)^c \cap B(x,R))= \text{Leb}(B(0,R))- \text{Leb}(B(x,R) \cap B(0,R)).
\end{align*}
Besides, we have
\begin{align*}
y \in B(0,R) \cap B(x,R) \Leftrightarrow x/2-y \in B(0,R) \cap B(x,R),
\end{align*}
and also
\begin{align*}
y=(y_1,...,y_d)^t \in B(0,R), \hspace{0.1cm} y_1 \geq r/2 \Rightarrow y \in B(x,R).
\end{align*}
This implies
\begin{equation}\label{eq:intersec}
\mathrm{Leb}( B(0,R) \cap B(x,R) )=2\text{Leb}(y=(y_1,...,y_d)^t \in B(0,R), \hspace{0.1cm}y_1 \geq r/2).
\end{equation}
For $s \in [r/2,R]$, we denote by $H_s$ the affine hyperplane $\{ y = (y_1,...,y_d)^t \in \R^d, \hspace{0.1cm}y_1=s \}$. The intersection $H_s \cap  B(0,R) \cap B(x,R)= H_s \cap B(0,R)$ is a $(d-1)$-ball of radius $\sqrt{R^2-s^2}$ and with $(d-1)$-Lebesgue measure
\begin{align*}
\frac{\pi^{\frac{d-1}{2}}\sqrt{R^2-s^2}^{d-1}}{\Gamma\left( \frac{d-1}{2}+1\right)} = c_{d-1}\sqrt{R^2-s^2}^{d-1}.
\end{align*}
We obtain from equation (\ref{eq:intersec}) that
\begin{align*}
\mathrm{Leb}( B(0,R) \cap B(x,R) )&=2 c_{d-1}\int_{r/2}^R\sqrt{R^2-s^2}^{d-1}ds =2c_{d-1} R^{d-1}\int_{r/2}^R\sqrt{1- (s/R)^2}^{d-1}ds
\end{align*}
Performing the change of variable $u= s/R$, we have
\begin{align*}
\mathrm{Leb}( B(0,R) \cap B(x,R) )&=2c_{d-1} R^{d} \int_{r/2R}^1 \left(1-u^2\right)^{\frac{d-1}{2}}du \\
&=2c_{d-1} R^{d} \int_{r/2R}^1 \left( \sum_{k=0}^{+ \infty} \frac{\left(-\frac{d-1}{2} \right)_k}{k!}u^{2k} \right) du\\
&=2c_{d-1} R^{d} \left\lbrace \sum_{k=0}^{+\infty}\frac{\left(-\frac{d-1}{2}\right)_k}{(2k+1)k!} \left(1-\left(\frac{r}{2R}\right)^{2k+1}\right) \right\rbrace.
\end{align*}
The intervertion sum integral performed on the last line is justified since the series
\[\sum_{k=0}^{+ \infty} \frac{\left(-\frac{d-1}{2} \right)_k}{k!}
\]
is convergent, either as a finite sum when $d$ is odd, while for even $d$, one has by Stirling formula
\begin{align*}
\frac{\left( -\frac{d-1}{2}\right)_k}{k!} =\frac{\Gamma\left( k- \frac{d-1}{2}\right)}{\Gamma\left( -\frac{d-1}{2}\right)\Gamma(k+1)} \underset{k\rightarrow +\infty}{\sim} \frac{k^{-(d-1)/2-1}}{\Gamma\left( -\frac{d-1}{2}\right)}.
\end{align*}
With $r=0$, we obtain in particular the expression
\begin{align*}
\text{Leb}(B(0,R))=2c_{d-1} R^{d} \sum_{k=0}^{+\infty}\frac{\left(-\frac{d-1}{2}\right)_k}{(2k+1)k!},
\end{align*}
from which we deduce
\begin{align*}
\text{Leb}(B(0,R)^c \cap B(x,R))&=2c_{d-1} R^{d}\sum_{k=0}^{+\infty}\frac{\left(-\frac{d-1}{2}\right)_k}{(2k+1)k!}\left( \frac{r}{2R}\right)^{2k+1},\\
\end{align*}
and the proof is complete.
\end{proof}
We can now conclude the proof of Proposition \ref{prop:varballradial}. For simplicity, we take $\kappa=1$ and prove that the radial point process $\P$ is hyperuniform along the exhaustion $\{B(0,R)\}_{R>0}$, the proof for the exhaustion $\{B(0,R \kappa )\}_{R >0}$ being similar. We first use Proposition \ref{prop:varlunule} and write, for $R>0$
\begin{multline} \label{eq:varball1}
\frac{\var ( \Xi(B(0,R))}{\E[\Xi(B(0,R)]}= \frac{1}{\mathrm{Leb}(B(0,R))} \int_{B(0,2R)} \varphi( ||x|| ) \mathrm{Leb}(B(0,R)^c \cap B(x,R) ) dx \\
+ \int_{B(0,2R)^c} \varphi(||x||) dx.
\end{multline}
By the reproducing property (\ref{eq:repprop}), the integral
\begin{align} \label{eq:reppropint}
\int_{\R^d} \varphi(||x||)dx= \int_{\R^d} |K(x,0)|^2dx= K(0,0)=1
\end{align}
is finite, and thus
\begin{align*}
\int_{B(0,2R)^c} \varphi(||x||) dx \to 0
\end{align*}
as $R \to + \infty$. Using polar coordinates and Proposition (\ref{prop:intersecvolume}), we rewrite equation (\ref{eq:varball1}) as
\begin{align} \label{eq:hyperunifball}
\frac{\var ( \Xi(B(0,R)) )}{\E[\Xi(B(0,R)]} = 2 d c_{d-1}\sum_{k=0}^{+\infty} \frac{ \left(-\frac{d-1}{2} \right)_k}{(2k + 1) k!} \frac{1}{(2R)^{2k +1}} \int_0^{2R} r^{2k +1} r^{d-1}\varphi(r)dr + o(1).
\end{align}
The constant factor $2 d c_{d-1}$ above is obtained from the expression for the $(d-1)$-Lebesgue measure of the unit sphere $\sigma_{d-1}$ given by
\begin{align*}
\sigma_{d-1} = d c_d.
\end{align*}
We now use the following Lemma in order to treat the integrals
\begin{align*}
\frac{1}{(2R)^{2k +1}} \int_0^{2R} r^{2k +1} r^{d-1}\varphi(r)dr.
\end{align*}
\begin{lem} \label{lem:dense} Let $g \in L^1([0, + \infty), dr)$ and let $\eta >0$. Then we have
\begin{align}
\lim_{R \to + \infty} \frac{1}{(2R)^{\eta}} \int_0^{2R} r^\eta g(r) dr =  0.
\end{align}
\end{lem}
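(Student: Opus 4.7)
The plan is to rewrite the quantity as the integral of a family of functions dominated by $|g|$ and apply the dominated convergence theorem. For $R>0$, define
\[
f_R(r) = \left(\frac{r}{2R}\right)^\eta g(r) \trivial_{[0,2R]}(r), \quad r \in [0,+\infty).
\]
Then
\[
\frac{1}{(2R)^\eta}\int_0^{2R} r^\eta g(r)\,dr = \int_0^{+\infty} f_R(r)\,dr.
\]
Since $0 \le r/(2R) \le 1$ on $[0,2R]$, we have the pointwise domination $|f_R(r)| \le |g(r)|$, and $|g| \in L^1([0,+\infty),dr)$ by hypothesis. Moreover, for each fixed $r \ge 0$, as soon as $2R > r$ one has $f_R(r) = (r/(2R))^\eta g(r)$, which tends to $0$ as $R \to +\infty$ because $\eta > 0$. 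Hence $f_R \to 0$ pointwise and dominated convergence yields $\int f_R \to 0$, which is the claim.

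The argument is purely measure-theoretic and presents no real obstacle; the only subtle point is noticing that the natural factor $(r/(2R))^\eta$ produced by rescaling gives both a free domination by $1$ on the range of integration and pointwise vanishing as $R \to +\infty$. Alternatively, one may give an elementary $\varepsilon$-splitting proof: for $\varepsilon > 0$, choose $M$ with $\int_M^{+\infty} |g(r)|\,dr < \varepsilon$; then the contribution of $[0,M]$ to the integral is bounded by $(M/(2R))^\eta \|g\|_1 \to 0$, while the contribution of $[M,2R]$ is bounded by $\int_M^{2R} |g(r)|\,dr < \varepsilon$, so the $\limsup$ is at most $\varepsilon$.
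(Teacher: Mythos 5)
Your proof is correct, and both of your arguments differ from the paper's. The paper proves the lemma by approximating $g$ in $L^1$ by a bounded, compactly supported $g_0$ with $\|g-g_0\|_1\le\varepsilon$, bounding the error term by $\|g-g_0\|_1$ via the crude estimate $r^\eta\le(2R)^\eta$ and letting the $g_0$-term vanish because $g_0$ has bounded support; your main argument instead packages the rescaled integrand as the dominated family $f_R(r)=\bigl(r/(2R)\bigr)^\eta g(r)\trivial_{[0,2R]}(r)$ with $|f_R|\le|g|$ and $f_R\to 0$ pointwise, and invokes dominated convergence (valid for the continuous limit $R\to+\infty$ by passing to arbitrary sequences), while your alternative splits the domain at a level $M$ chosen so that the tail of $|g|$ is small, rather than splitting the function as the paper does. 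All three routes are elementary and yield the same conclusion; your DCT version is the most compact, the paper's density argument stays at the level of triangle-inequality estimates once one accepts density of bounded compactly supported functions in $L^1$, and your domain-splitting variant is arguably the most self-contained since it needs only the finiteness of $\int_M^{+\infty}|g|$ for large $M$.
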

\begin{proof}
We use the fact that bounded compactly supported functions are dense in $L^1([0,+\infty),dr)$. Let $\varepsilon>0$ and let $g_0 \in L^1([0, + \infty),dr)$ be bounded and compactly supported such that
\begin{align} \label{ineq:densel1}
\int_0^{+\infty} |g(r)-g_0(r)|dr \leq \varepsilon.
\end{align}
We now write
\begin{align} \label{ineq:triangl1}
\frac{1}{(2R)^\eta} \left| \int_0^{2R} r^\eta g(r) dr \right| \leq \frac{1}{(2R)^\eta} \int_0^{2R} r^\eta |g(r)-g_0(r)| dr + \frac{1}{(2R)^\eta} \int_0^{2R} r^\eta |g_0(r)|dr.
\end{align}
Since the function $g_0$ is bounded and compactly supported, we have
\begin{align*}
\frac{1}{(2R)^\eta} \int_0^{2R} r^\eta |g_0(r)|dr \to 0
\end{align*}
as $R \to + \infty$. Since
\begin{align*}
\frac{1}{(2R)^\eta} \int_0^{2R} r^\eta |g(r)-g_0(r)| dr \leq \int_0^{2R} |g(r)-g_0(r)|dr
\end{align*}
and recalling inequalities (\ref{ineq:densel1}) and (\ref{ineq:triangl1}), we obtain
\begin{align*}
\limsup_{R \to + \infty} \frac{1}{(2R)^\eta} \left| \int_0^{2R} r^\eta g(r) dr \right| \leq \varepsilon,
\end{align*}
and the proof is complete.
\end{proof}
We apply Lemma \ref{lem:dense} to the function $g(r)=r^{d-1}\varphi(r)$, which is integrable by formula  (\ref{eq:reppropint}), and the exponents $\eta = 2k+1$ to obtain for any $k\geq 0$
\begin{align*}
\lim_{R \to + \infty} \frac{1}{(2R)^{2k +1}}  \int_{0}^{2R} r^{2k +1} r^{d-1} \varphi(r) dr =0.
\end{align*}
Recalling equation (\ref{eq:hyperunifball}), we conclude the proof of Proposition \ref{prop:varballradial} by applying the dominated convergence Theorem.
\begin{rem}
    One could ask, given a function $\varphi : [0, + \infty ) \to [0, + \infty)$, whether there exists a kernel $K$ on $\R^d \times \R^d$ of an orthogonal projection operator such that $|K(x,y)|^2 = \varphi ( || x-y ||)$. A necessary condition is obtained from the reproducing property (\ref{eq:repprop}) for the kernel $K$ that implies
    \begin{align*}
        \int_{\R^d} \varphi(||x-y||)dy = \int_{\R^d} \varphi(||x||) dx = dc_d \int_0^{+ \infty} r^{d-1} \varphi(r) dr =  \varphi(0)^{1/2}.
    \end{align*}
\end{rem}
\section{Examples} \label{sec:ex}
\subsection{Partial isometries}
\subsubsection{A general framework}
Partial isometries in the context of determinantal point processes have been studied in \cite{KS22a}. We consider another locally compact complete and separable metric space $S'$ equiped with a Radon measure $\lambda'$.
\begin{defi}A \emph{partial isometry} between $L^2(S,\lambda)$ and $L^2(S',\lambda')$ is a linear map
\begin{align*}
\cW : L^2(S,\lambda) \to L^2(S', \lambda')
\end{align*}
such that
\begin{align*}
||\cW f ||_{L^2(S',\lambda')} = ||f||_{L^2(S,\lambda)}
\end{align*}
for all $f$ in the orthogonal complement of $\ker \cW$. 
\end{defi}
If the linear map $\cW :  L^2(S,\lambda) \to L^2(S', \lambda') $ is a locally Hilbert-Schmidt partial isometry, then the operator $\cK:=\cW^* \cW$ on $L^2(S,\lambda)$ is the locally trace class orthogonal projection onto the closure of the range of $\cW^*$, which is the orthogonal complement of the kernel of $\cW$.

For a relatively compact subset $\La \subset S$, we define the self-adjoint operator $\cH_\La = \cW \cP_{\La} \cW^*$ and consider its spectral decomposition
\[ \cH_\La:= \sum_{j=1}^{+ \infty} \mu_j^{(\La)} h_j^{(\La)} \otimes h_j^{(\La)}. \]
Recall that the functions $\Psi_j^{(\La)}$ together with the accumulated spectrogram $\rho_\La$ associated to $(\cK,\La)$ have been defined through the polar decomposition of $\cK \cP_\La$. In such a context, wa have another interpretation of the accumulated spectrogram.
\begin{prop} \label{prop:partialisometries} We have
\begin{align}
    \rho_\La(x) = \sum_{j=1}^{N_\La} | \cW^* h_j^{(\La)} (x) |^2.
\end{align}
\end{prop}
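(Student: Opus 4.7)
The plan is to exploit the relations $\cM_\La = A^* A$ and $\cH_\La = A A^*$, where $A := \cW \cP_\La$; these follow directly from $\cP_\La^2 = \cP_\La$ and $\cK = \cW^* \cW$. By the standard singular value dictionary, $\cM_\La$ and $\cH_\La$ share the same non-zero spectrum with multiplicities, and one can select the orthonormal systems of eigenvectors so that, for every $j$ with $\mu_j^{(\La)} > 0$,
\[ h_j^{(\La)} = \frac{1}{\sqrt{\mu_j^{(\La)}}}\, A\,\Phi_j^{(\La)} = \frac{1}{\sqrt{\mu_j^{(\La)}}}\, \cW\, \Phi_j^{(\La)}, \]
the last equality using that $\Phi_j^{(\La)}$, being an eigenvector of $\cM_\La$ for a non-zero eigenvalue, is supported in $\La$: $\cP_\La \Phi_j^{(\La)} = \Phi_j^{(\La)}$.

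Next I would apply $\cW^*$ to this identity. Since $\cN_\La = \cK \cP_\La = \cW^* \cW \cP_\La$, and since by the defining relation (\ref{eq:exprpsi}) one has $\cN_\La \Phi_j^{(\La)} = \sqrt{\mu_j^{(\La)}}\, \Psi_j^{(\La)}$, we obtain
\[ \cW^* h_j^{(\La)} = \frac{1}{\sqrt{\mu_j^{(\La)}}}\, \cW^* \cW\, \Phi_j^{(\La)} = \frac{1}{\sqrt{\mu_j^{(\La)}}}\, \cN_\La \Phi_j^{(\La)} = \Psi_j^{(\La)} \]
for every $j$ with $\mu_j^{(\La)} > 0$. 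For the (at most finitely many) indices $j \le N_\La$ with $\mu_j^{(\La)} = 0$, the function $\Psi_j^{(\La)} = \cW_\La \Phi_j^{(\La)}$ vanishes because $\cW_\La$ annihilates $\ker(\cM_\La)$; one may choose the corresponding $h_j^{(\La)}$ inside $\ker(\cW^*) \subset \ker(\cH_\La)$ so that $\cW^* h_j^{(\La)} = 0$ as well. Squaring and summing the identity $\Psi_j^{(\La)} = \cW^* h_j^{(\La)}$ over $j \in \{1,\dots,N_\La\}$ yields the claim.

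The main, but essentially routine, obstacle is the bookkeeping needed to align compatible orthonormal bases of $\cM_\La$ and $\cH_\La$ through the SVD of $A$, especially inside degenerate eigenspaces attached to a common positive eigenvalue (where each family of eigenvectors is defined only up to a unitary rotation). Once the eigenvectors are paired according to the SVD formula displayed above, the result reduces to the short algebraic manipulations described, using only (\ref{eq:exprpsi}) and $\cK = \cW^*\cW$; no analytic estimates are required.
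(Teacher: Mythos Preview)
Your argument is correct and follows essentially the same SVD dictionary as the paper: the paper shows that the $\Psi_j^{(\La)}$ are the eigenvectors of $\cK \cP_\La \cK$ and then invokes the conjugation $\cK \cP_\La \cK = \cW^* \cH_\La \cW$, while you work directly with $A=\cW\cP_\La$, $\cM_\La=A^*A$, $\cH_\La=AA^*$ and the transfer $h_j^{(\La)}=\mu_j^{-1/2}A\Phi_j^{(\La)}$ to obtain $\cW^* h_j^{(\La)}=\Psi_j^{(\La)}$ pointwise. Both routes are the same polar/SVD mechanism and both require the same bookkeeping within degenerate eigenspaces; your presentation is slightly more explicit in tying back to~(\ref{eq:exprpsi}).
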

\begin{proof}
    We first notice that the spectral decomposition of the trace class hermitian operator $\cK \cP_\La \cK$ can be written as
    \begin{align}
        \cK \cP_\La \cK = \sum_{j=1}^{+\infty} \mu_j^{(\La)} \Psi_j^{(\La)} \otimes \Psi_j^{(\La)}.
    \end{align}
    Indeed, the operators $\cK \cP_\La \cK$ and $\cN_\La = \cK \cP_\La$ have the same non-zero eigenvalues, and  for all $j \in \N$ such that $\mu_j^{\La } \neq 0$, we have from (\ref{eq:exprpsi}) that
    \begin{align*}
        \cK \cP_\La \cK \Psi_j^{(\La)} &= \frac{1}{\sqrt{\mu_j^{(\La)}}} \cK \cP_\La \cK (\cK \cP_\La )\Phi_j^{(\La)} 
        = \frac{1}{\sqrt{\mu_j^{(\La)}}} \cK \cP_\La ( \cP_\La \cK \cP_\La) \Phi_j^{(\La)}  \\
        &= \sqrt{\mu_j^{(\La)}} \cK \cP_\La \Phi_j^{(\La)}
        = \mu_j^{(\La)} \Psi_j^{(\La)}.
    \end{align*}
    The claim now follows from the fact that we have the conjugation
    \begin{align}
        \cK \cP_\La \cK = \cW^* ( \cW \cP_{\La} \cW^* ) \cW.
    \end{align}
\end{proof}
\subsubsection{Short-time Fourier transform and space-time localization operators}
Within the framework described above, in \cite{AGR16}, the authors consider the partial isometry $\cW$ to be the adjoint of the short-time Fourier transform $V_g$ defined as follows (see also \cite{Gro01} for further background): for a {\it window} function $g \in L^2(\R^d,dx)$ with $||g||_2 =1$, the operator $V_g : L^2(\R^d) \to L^2(\R^{2d})$ is defined by
\begin{align*}
    V_g f (x, \xi) = \int_{\R^d} f(t) g(t-x) e^{- 2i \pi t \cdot \xi} dt, \quad f \in L^2(\R^d), \hspace{0.1cm} (x,\xi) \in \R^{2d}.
\end{align*}
Its adjoint operator is given by
\begin{align*}
    V_g^* F (t) = \int_{\R^{2d}} F(x,\xi) g(t-x)e^{2 i \pi \xi \cdot t} dx d\xi, \quad F \in L^2(\R^{2d}), \hspace{0.1cm} t \in \R^d. 
\end{align*}
For a bounded set $\La \subset \R^{2d}$, the time-frequency localization operator is then defined by:
\begin{align}
    \cH_\La := V_g^* \cP_\La V_g,
\end{align}
and has the spectral decomposition
\begin{align*}
    \cH_\La = \sum_{j \geq 1} \mu_j^{(\La)} h_j^{(\La)} \otimes h_j^{(\La)}.
\end{align*}
The authors in \cite{AGR16} then define the accumulated spectrogram to be the function
\begin{align} \label{def:accspecAGR}
    \rho_\La (x) = \sum_{j=1}^{N_\La} |V_g h_j^{(\La)}(x)|^2.
\end{align}
From Proposition \ref{prop:partialisometries} above, we see that the definition \ref{def:accspec} we use coincides with (\ref{def:accspecAGR}). In particular, our main results Theorem \ref{thm:L1-limit} and Corollaries \ref{cor:L1-lim1} and \ref{cor-L1lim2} are generalizations of Theorem 1.3 in \cite{AGR16}.
\subsubsection{The Heisenberg family of determinantal point processes}
The state space is now $\C^d = \R^{2d}$, equipped with the euclidean norm $|z|^2=|z_1|^2 + \dots +|z_d|^2$ for $z=(z_1,\dots,z_d) \in \C^d$ and the Lebesgue measure $dm(z)$. Taking $g$ to be the normalized gaussian function
\begin{align*}
    g(x)= 2^{d/4} \exp \left( - \pi ||x||^2 \right),
\end{align*}
and writing  $z = x + i\xi$, we obtain that the kernel $K$ of the operator $\cK = V_g V_g^*$ defined in the preceding section is given by
\begin{align*}
K(z,w)= e^{\pi(z \cdot \overline{w} - \frac{|z|^2}{2} - \frac{|w|^2}{2})},
\end{align*}
so that
\begin{align*}
|K(z,w)|^2 = e^{-\pi|z-w|^2}.
\end{align*}
Up to the change of the measure $dm(z) \mapsto e^{-\pi|z|^2}dm(z)$, the kernel $K$ is the reproducing kernel of the Bargmann-Fock space. We let $\P$ be the radial determinantal point process with kernel $K$. In the case $d=1$, the point process $\P$ is nothing but the infinite Ginibre enemble (\cite{Gin65}). Several generalizations of these point processes have been considered, such as determinantal point processes corresponding to poly-analytic Fock spaces or higher Landau levels (\cite{Shirai15}, \cite{APRT17}, \cite{K22}, \cite{MKS21}), as well as finite versions (\cite{AGR19}, \cite{HaimiHedenmalm13}), establishing hyperuniformity along balls. The name \emph{Heisenberg} is justified from relations with the representation theory of the Heisenberg group. Note also that \cite{K22} and \cite{MKS21} give a full asymptotic expansion for the number variance of particles inside a ball of large radius.
\subsection{The Euclidean family of determinantal point processes associated to Paley-Wiener spaces}
We derscribe here what we call the \emph{Euclidean family of determinantal point processes}, according to the terminology of \cite{KS22a}, see also \cite{Zel00}. The term \emph{Euclidean} refers to the invariance by the group of Euclidean motions of this family of determinantal point processes, i.e. they are both rotationally invariant and translation invariant. These radial determinantal point processes were also studied in \cite{SZT2009}, \cite{TSZ2008} under the name \emph{Fermi-sphere ensembles}. They also appear in the context of semi-classical analysis in \cite{DeleporteLambert}, \cite{KS22b}, see also \cite{Zel00}. Below, we use the method of the proof of Proposition \ref{prop:varballradial} to that example to derive the first order asymptotics for the number variance of particles inside a ball of large radius.

We now give a proper defintion. We let $\cF$ be the Fourier transform on $L^2(\R^d,dx)$
\begin{align*}
\cF f(\xi) = \frac{1}{(2\pi)^d}\int_{\R^d} f(x) e^{-i \xi \cdot x} dx. 
\end{align*}
The Paley-Wiener space is defined as the subspace of $L^2(\R^d,dx)$ of functions having their Fourier transform supported in the unit ball $B(0,1)$. The projection operator onto the Paley-Wiener space is the operator $\cF^{-1} \cP_{B(0,1)} \cF$ and its kernel reads
\begin{align*}
\mathcal{S}_d(x,y) = \frac{1}{(2\pi)^d}\int_{B(0,1)} e^{i \xi  \cdot(x- y)} d\xi = \frac{1}{(2\pi)^{d/2}} \frac{J_{d/2}(||x-y||)}{||x-y||^{d/2}},
\end{align*}
where $J_{d/2}$ is the Bessel function of order $d/2$ of the first kind
\begin{align*}
J_{d/2}(x)= \sum_{k=0}^{+ \infty} \frac{(-1)^k}{k!\Gamma(1+d/2+k)} \left(\frac{x}{2}\right)^{2k + d/2}.
\end{align*}
\begin{rem}
    If, instead of considering the unit ball $B(0,1)$, one considers a set $A \subset \R^d$ that is bounded and rotationally symmetric, i.e. if $OA = A$ for any orthogonal transformation $O \in O(d)$, then the operator $\cK_A := \cF^{-1} \cP_A \cF $ is a locally trace class orthogonal projection on $L^2(\R^d, dx)$ with kernel
    \begin{align*}
        K_A (x,y) \frac{1}{(2\pi)^d} \int_A e^{i \xi \cdot (x-y)} d\xi,
    \end{align*}
    that is also radially symmetric. Thus, the operator $\cK_A$ gives rise to another example of a radial determinantal point process on $\R^d$.
\end{rem}
\begin{defi}The \emph{Euclidean family of determinantal point processes} $\P_{\cS_d}$ on $\R^d$, $d=1,2,\dots$, is formed by the radial determinantal point processes on $\R^d$ with correlation kernel $\mathcal{S}_d$.
\end{defi}
For $d=1$, one recovers the usual Paley-Wiener space and the Dyson's sine process with kernel
\begin{align*} 
\mathcal{S}_1(x,y)= \frac{\sin(x-y)}{\pi(x-y)}.
\end{align*}
The hyperuniformity of the Euclidean family of determinantal point processes $\P_{\cS_d}$ follows from the genral Propositions \ref{prop:hyperunifradial} and \ref{prop:varballradial}. Using known asymptotics of the Bessel functions allows to obtain a more precise statement than Proposition \ref{prop:varballradial} given by the following Proposition.
\begin{prop}\label{prop:varsinc} As $R \to + \infty$, we have the asymptotics
\begin{align*}
\var(\Xi(B(0,R))= \frac{1}{2^{d-1}\pi^{3/2}\Gamma \left( \frac{d+1}{2}\right) \Gamma \left( \frac{d}{2} \right)} \log(R)R^{d-1} + O(R^{d-1}).
\end{align*}
\end{prop}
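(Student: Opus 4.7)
The plan is to plug the Bessel formula for $\mathcal{S}_d$ into the geometric identity of Proposition \ref{prop:varlunule} and extract the $\log R$ from the classical large-$r$ asymptotic of $J_{d/2}$. First, passing to polar coordinates and using the lens-volume identity derived inside the proof of Proposition \ref{prop:intersecvolume}, namely
\[
\mathrm{Leb}(B(0,R)^c \cap B(x,R)) = 2c_{d-1}\int_0^{\|x\|/2}(R^2-s^2)^{(d-1)/2}\,ds = 2c_{d-1}R^d\,F\bigl(\tfrac{\|x\|}{2R}\bigr)
\]
for $\|x\|\le 2R$, where $F(t):=\int_0^t(1-u^2)^{(d-1)/2}\,du$, and $\mathrm{Leb}(B(0,R)^c\cap B(x,R))=c_dR^d$ for $\|x\|>2R$, I would rewrite
\[
\var(\Xi(B(0,R))) = 2\sigma_{d-1}c_{d-1}R^d\int_0^{2R}r^{d-1}\varphi(r)\,F\bigl(\tfrac{r}{2R}\bigr)dr + c_dR^d\sigma_{d-1}\int_{2R}^{\infty}r^{d-1}\varphi(r)\,dr,
\]
with $\varphi(r)=J_{d/2}(r)^2/((2\pi)^d r^d)$.

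Next I would insert the standard Bessel expansion $J_{d/2}(r)=\sqrt{2/(\pi r)}\cos(r-(d+1)\pi/4)+O(r^{-3/2})$, which after squaring and the identity $2\cos^2=1+\cos(2\cdot)$ yields
\[
r^d\varphi(r) = \frac{1+\cos(2r-(d+1)\pi/2)}{(2\pi)^d\pi\, r} + O(r^{-2}),
\]
and split $F(t)=t+G(t)$ with $G(t)=O(t^3)$ near $0$ and $|G|$ bounded on $[0,1]$. The $t$-piece of $F$ produces the main term
\[
\sigma_{d-1}c_{d-1}R^{d-1}\int_0^{2R}r^d\varphi(r)\,dr.
\]
Splitting this at a fixed $A\ge 1$, the contribution from $[0,A]$ is bounded, while on $[A,2R]$ the $1/r$ piece yields $\frac{1}{(2\pi)^d\pi}\log R+O(1)$, the oscillatory $\cos(2r-\cdot)/r$ piece is $O(1)$ by Dirichlet's test (equivalently a single integration by parts), and the $O(r^{-2})$ remainder is $O(1)$. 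Hence the main term equals $\frac{\sigma_{d-1}c_{d-1}}{(2\pi)^d\pi}R^{d-1}\log R+O(R^{d-1})$, and a direct computation with $\sigma_{d-1}=2\pi^{d/2}/\Gamma(d/2)$ and $c_{d-1}=\pi^{(d-1)/2}/\Gamma((d+1)/2)$ identifies the coefficient with $\bigl(2^{d-1}\pi^{3/2}\Gamma(d/2)\Gamma((d+1)/2)\bigr)^{-1}$.

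Finally I would bound the $G$-contribution and the tail. Splitting the $G$-integral at $r=R$: for $r\in[0,R]$, $|G(r/(2R))|\le C(r/R)^3$ together with $\int_0^R r^{d+2}\varphi(r)\,dr=O(R^2)$ (the non-oscillatory part gives $O(R^2)$ and the oscillatory part is $O(R)$ by integration by parts) produces an $O(R^{d-1})$ contribution; for $r\in[R,2R]$, $|G|\le C$ and $\int_R^{2R}r^{d-1}\varphi(r)\,dr=O(1/R)$, again $O(R^{d-1})$. The tail $R^d\int_{2R}^{\infty}r^{d-1}\varphi(r)\,dr$ is likewise $O(R^{d-1})$ from the same Bessel bound. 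Combining these estimates with the main term gives the claimed asymptotic. The delicate step will be the uniform handling of the oscillatory integrals appearing both in the extraction of $\log R$ and in the error terms, to ensure that no contribution below $R^{d-1}\log R$ escapes the $O(R^{d-1})$ envelope.
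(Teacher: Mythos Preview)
Your proof is correct and follows essentially the same route as the paper: both feed the Bessel asymptotic $J_{d/2}(r)^2\sim(1+\cos(2r-\cdot))/(\pi r)$ into the geometric variance identity of Proposition~\ref{prop:varlunule} and isolate the leading term of the lens volume to extract the $\log R$. The only cosmetic difference is that the paper expands $F(t)$ as the power series of Proposition~\ref{prop:intersecvolume} and handles the $k=0$ term separately (with uniform bounds for $k\ge 1$), whereas you keep $F$ in closed form and split it as $t+G(t)$; your $t$-piece is exactly the paper's $k=0$ term, and your $G$-bound packages the paper's $k\ge 1$ estimates into a single inequality.
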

\begin{proof}
Set
\begin{align*}
g(r)= \frac{J_{d/2}(r)^2}{r}, \quad r \geq 0,
\end{align*}
so that Equation (\ref{eq:varball1}) reads in that particular case
\begin{align} \label{eq:varballsinc}
\var(\Xi(B(0,R)) &=  \frac{2c_{d-1} \sigma_{d-1}}{(2\pi)^d} \sum_{k=0}^{+ \infty} \frac{\left(- \frac{d-1}{2}\right)_k}{k!(2k+1)} \frac{R^d}{(2R)^{2k+1}}\int_0^{2R} r^{2k+1}g(r)dr \\
&+ c_d \sigma_{d-1}R^d \int_{2R}^{+ \infty} g(r) dr. \nonumber
\end{align}
As $r \to + \infty$, we have the following asymptotics for the Bessel function
\begin{align*}
J_{d/2}(r)^2 \sim \frac{1+ \cos\left(2r -\frac{(d-1)\pi}{2}\right)}{\pi r},
\end{align*}
from which we deduce that
\begin{align} \label{equiv1}
\int_0^{2R} r g(r)dr = \frac{1}{\pi }\log(R) + O(1),
\end{align}
as well as
\begin{align}\label{equiv2}
\int_0^{2R}r^{2k+1}g(r) dr = O(R^{2k})
\end{align}
uniformly for $k \geq 1$, and finally
\begin{align} \label{equiv3}
\int_{2R}^{+\infty} g(r)dr = O(R^{-1}).
\end{align}
Inserting the estimates (\ref{equiv1}), (\ref{equiv2}) and (\ref{equiv3}) into Equation (\ref{eq:varballsinc}), we obtain
\begin{align*}
\var( \Xi(B(0,R)) = \frac{c_{d-1} \sigma_{d-1}}{2^{d-1} \pi^{d+1}} R^{d-1} \log(R) + O(R^{d-1}).
\end{align*}
The constant factor is equal to
\begin{align*}
\frac{c_{d-1} \sigma_{d-1}}{2^{d-1} \pi^{d+1}} = \frac{1}{2^{d-1}\pi^{3/2}\Gamma \left( \frac{d+1}{2}\right) \Gamma \left( \frac{d}{2} \right)},
\end{align*}
which concludes the proof.
\end{proof}




\end{document}